
\documentclass[11pt]{article}       
\usepackage{geometry}               
\geometry{letterpaper}          
\geometry{margin=1in}

\usepackage{graphicx}
\usepackage{caption}
\usepackage{subcaption}
\usepackage{comment}
\usepackage{amsmath} 
\usepackage{amssymb}  
\usepackage{amsthm}  
\usepackage{bm}
\usepackage{lipsum}
\usepackage{color}
\usepackage{multirow}
\usepackage{array}
\usepackage{cite}
\usepackage{hyperref}

\newtheorem{prop}{Proposition}

\newtheorem{cor}{Corollary}
\newtheorem{lem}{Lemma}
\newtheorem{thm}{Theorem}
\newtheorem{rem}{Remark}
\newtheorem{assumption}{Assumption}

\numberwithin{equation}{section}

\newcommand{\R}{\mathbb{R}}

\title{On a minimum eradication time for the SIR model with time-dependent coefficients}

\author{%
  Jiwoong Jang \thanks{Department of Mathematics, University of Wisconsin-Madison.}%
  \and Yeoneung Kim \thanks{Department of Applied Artificial Intelligence, Seoul National University of Science and Technology.}%
  }

\date{}
\providecommand{\keywords}[1]{\textbf{Key words.} #1}

\begin{document}
\maketitle

\pagestyle{myheadings}
\thispagestyle{plain}

\begin{abstract}
We study the minimum eradication time problem for controlled Susceptible-Infected-Recovered (SIR) epidemic models that incorporate vaccination control and time-varying infected and recovery rates. Unlike the SIR model with constant rates, the time-varying model is more delicate as the number of infectious individuals can oscillate, which causes ambiguity for the definition of the eradication time. We accordingly introduce two definitions that describe the minimum eradication time, and we prove that for a suitable choice of the threshold, the two definitions coincide. We also study the well-posedness of time-dependent Hamilton--Jacobi equation that the minimum eradication time satisfies in the viscosity sense and verify that the value function is locally semiconcave under certain conditions.

\end{abstract}

\keywords{Compartmental models, Optimal control, Viscosity solutions, Hamilton-Jacobi equations}

\section{Introduction}
We are interested in studying an eradication time for the controlled Susceptible-Infectious-Recovered (SIR in short henceforth) model with time-varying rates $\beta(t)$ and $\gamma(t)$:
\begin{equation*}
    \begin{cases}
        \dot S &= - \beta(t) S I - \alpha(t) S,\\
        \dot I &= \beta(t) S I - \gamma(t) I,
    \end{cases}
\end{equation*}
where $\beta(t)$ and $\gamma(t)$ denote a time-dependent infected/recovery rate, respectively, and  $\alpha(t)$ represents a vaccination control. The goal of this paper is to study the mathematical properties of the value function that represents the minimum eradication time, defined by the first time at which the population $I$ of infectious is less than or equal to $\mu$ and remains below afterward for a given small threshold $\mu>0$. It turns out that the eradication time should be defined carefully, and its precise definition will be given in Subsection \ref{subsec:notation}. 

For time-independent rates $\beta,\gamma>0$, the minimum eradication time is always well-defined as $I$ shows a simple behavior, either decreasing or increasing first and decreasing afterward. However, for our case, more careful analysis should be carried out as the number of infectious individuals $I$ can oscillate; for instance, even after $I$ goes below a given threshold $\mu>0$, it can bounce up and down several times. 

In this regard, for time-varying rates $\beta(t)$ and $\gamma(t)$, given $\mu_0>0$, the selection of the threshold parameter, denoted as $\mu$, plays a crucial role in accurately identifying the minimum time at which the variable $I$ crosses $\mu$ and remains below this threshold for the duration of the observation as long as $I(0)\geq \mu_0$. More precisely, when $I$ oscillates around the threshold, one can observe the ambiguity and the discontinuity of the eradication time as demonstrated in Figure~\ref{fig:two_eradication}. This paper proves with the compactness argument that given any $\mu_0$, we can select $\mu$ small enough so that the ambiguity and instability of the two types of eradication are avoided as long as $I(0) \geq \mu_0$. Furthermore, we also present the time-dependent Hamilton--Jacobi equation associated with as well as the local semiconcavity result.

\subsection{Literature review}

We introduce a list, but by no means complete, of the works on the vaccination strategy and the eradication time for SIR epidemic models. The SIR model is a classical model as studied in \cite{kermack1927SIR}, and its variants have received a lot of attention particularly during and after the outbreak of COVID-19. The vaccination strategy as a control and the eradication time as a minimum cost function were investigated with optimal control theory \cite{barro2018optimal,pierre-alexandre2007optimal,bolzoni2017time,ev2016optimalcontrol}. For numerical simulations of the eradication for the time-varying SIR model, we refer to \cite{chen2020SIR}. The minimum eradication time problem in the aspect of free end-time optimal control problem was first studied by~\cite{bolzoni2017time} where the authors claim that the optimal plan is to remain inactive and provide the maximum control after a certain point, which is called switching control. 

In~\cite{ev2016optimalcontrol}, various sufficient conditions to ensure the eradication of disease for a time-varying SIR model were provided under some structural assumptions on the dynamics and the transmission rate such as periodicity. Another interesting work related to our paper is~\cite{lv2023time} where the authors study the eradication time for the Susceptible-Exposed-Infected-Susceptible compartmental model under the constraint of resources. In their paper, it was shown that the optimal vaccination control is indeed bang-bang control and there is a trade-off between the minimum eradication time and the total resources under the assumption that all parameters in the model are constants.

For mathematical treatments, the eradication time for controlled SIR models with constant infected and recovery rates was first studied as a viscosity solution to a static first-order Hamilton-Jacobi equation in \cite{hynd2021eradication}. Also, a critical time at which the infected population starts decreasing was analyzed in \cite{hynd2022critical}. The works \cite{hynd2021eradication,hynd2022critical} are for the SIR model with constant rates $\beta$ and $\gamma$.

To the best of our knowledge, this is the first work that studies the minimum eradication time for time-dependent SIR epidemic models in the framework of the dynamic programming principle and viscosity solutions. Also, we observe that for an arbitrarily given threshold, denoted by $\mu$ below, we may not have a unique description of the eradication time in time-varying environments, and we show that for a suitable choice of $\mu$, we necessarily have a unique definition of the eradication time and that this enjoys mathematical properties (such as the continuity and the semiconcavity). For this purpose, we separate the threshold $\mu$ from an initial population $I(0)$ of infectious, and this may suggest that with time-dependent rates, $\mu$ needs to be small enough compared to $I(0)$ for simulations, where the continuity is implicitly assumed.

\subsection{Notations}\label{subsec:notation}
We fix $\mu>0,\ \overline{\beta}\geq\underline{\beta}>0,\ \overline{\gamma}\geq\underline{\gamma}>0$ and continuous functions $\beta:[0,\infty)\to[\underline{\beta},\overline{\beta}]$, $\gamma:[0,\infty)\to[\underline{\gamma},\overline{\gamma}]$ throughout this paper. Let us define the set $\mathcal{A}$ of admissible controls and the data set $\mathcal{D}$ as follows:
\begin{align*}
\mathcal{A}&:=\left\{\alpha\in L^{\infty}([0,\infty)): 0\leq\alpha(t)\leq1 \textrm{ a.e. }t\geq0\right\},\\
\mathcal{D}&:=[0,\infty)\times[\mu,\infty)\times[0,\infty)\times\mathcal{A}.
\end{align*}
The set $\mathcal{A}$ is endowed with the weak$^{\ast}$ topology inherited from that of $L^{\infty}([0,\infty))$. The intervals $[0,\infty),[\mu,\infty),[0,\infty)$ are endowed with their usual topologies, and the data set $\mathcal{D}$ is endowed with their product topology.

For a given datum $d=(x,y,t,\alpha)\in\mathcal{D}$, we define $(S^d,I^d)$ to be the flow of the following ODE:
\begin{equation}\label{eq:translated}
    \begin{cases}
        \dot S^d &= - \beta^{t} S^d I - \alpha^{t} S^d,\\
        \dot I^d &= \beta^{t} S^d I^d - \gamma^{t} I^d,\\
        S^d(0)&=x,\\
        I^d(0)&=y,
    \end{cases}
\end{equation}
where $\alpha^{t}=\alpha(\cdot+t),\ \beta^{t}=\beta(\cdot+t),\ \gamma^{t}=\gamma(\cdot+t)$. By $(S,I)$ the flow associated with a datum $d$, we mean $(S,I)=(S^d,I^d)$ in this paper. When the associated datum $d$ is clear in the context, we abbreviate the superscript $d$ in $(S^d,I^d)$.

For a given datum $d=(x,y,t,\alpha)\in\mathcal{D}$, we define the upper (lower) value functions $\overline{u}^{\alpha}(x,y,t)$ ($\underline{u}^{\alpha}(x,y,t)$), respectively, by
\begin{align*}
\overline{u}^{\alpha}(x,y,t)&:=\sup \{s\geq 0 : I^d(s) \geq \mu\},\\
\underline{u}^{\alpha}(x,y,t)&:=\inf \{s\geq 0: I^d(s+a) \leq \mu,\forall a\geq0\}.
\end{align*}
For $(x,y,t)\in[0,\infty)\times[\mu,\infty)\times[0,\infty)$, we let
\begin{align*}
\overline{u}(x,y,t)&:=\inf_{\alpha\in\mathcal{A}}\overline{u}^{\alpha}(x,y,t),\\
\underline{u}(x,y,t)&:=\inf_{\alpha\in\mathcal{A}}\underline{u}^{\alpha}(x,y,t).
\end{align*}

It turns out that the value functions $\overline{u},\underline{u}$ enjoy the following important properties, which are our main contributions and are stated in the following subsection.

\subsection{Main results}
\begin{thm}\label{thm:sub/supersolution}
The value function $\overline{u}$ ($\underline{u}$, resp.) is upper semicontinuous (lower semicontinuous, resp.) on $[0,\infty)\times[\mu,\infty)\times[0,\infty)$. Moreover, $\overline{u}$ ($\underline{u}$, resp.) is a viscosity subsolution (supersolution, resp.) to
\begin{align}
-\partial_t u+\beta(t)xy\partial_xu+x(\partial_xu)_++(\gamma(t)-\beta(t)x)y\partial_yu=1\label{eqn:main}
\end{align}
in $(0,\infty)\times(\mu,\infty)\times(0,\infty)$. Here, $(\cdot)_+$ denotes the positive part of the argument.
\end{thm}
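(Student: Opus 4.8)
The statement has a regularity part and a viscosity part, and I would prove them in that order, the second through a dynamic programming principle (DPP). Two preliminaries come first. (i) $\overline{u}$ is finite and locally bounded (hence so is $\underline{u}$, since $\underline{u}\le\overline{u}$): taking the control $\alpha\equiv1$ forces $S^{d}(s)\le xe^{-s}$, so $\beta^{t}S^{d}-\gamma^{t}\le -c<0$ past a time depending only on $(x,t)$, whence $I^{d}$ decays exponentially and drops below $\mu$ by a time bounded on compact subsets of the state space. (ii) The flow depends continuously on the datum $d\in\mathcal{D}$: if $d_n\to d$ in $\mathcal{D}$ — in particular $\alpha_n\rightharpoonup^{*}\alpha$, whence $\alpha_n^{t_n}\rightharpoonup^{*}\alpha^{t}$ because translation is continuous on $L^{1}$ test functions — then $(S^{d_n},I^{d_n})\to(S^{d},I^{d})$ uniformly on every $[0,T]$; this is the standard control-affine argument (uniform a priori bounds on $[0,T]$ plus a Gr\"onwall estimate in which the only control-dependent term $\int_0^{s}\alpha_n S^{d_n}$ passes to the limit because $\alpha_n\rightharpoonup^{*}\alpha$ is paired with the strongly convergent $S^{d_n}$). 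Recall finally that $\mathcal{A}$ is weak$^{*}$ compact and metrizable, so sequential reasoning suffices.

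With (i)--(ii) the lower semicontinuity of $\underline{u}$ is immediate: if $(x_n,y_n,t_n)\to(x,y,t)$ and $\underline{u}(x_n,y_n,t_n)\to\ell<\infty$, pick $\alpha_n$ with $\underline{u}^{\alpha_n}(x_n,y_n,t_n)<\underline{u}(x_n,y_n,t_n)+1/n$; then there are \emph{bounded} $s_n\le\underline{u}(x_n,y_n,t_n)+1/n$ with $I^{d_n}(s_n+a)\le\mu$ for all $a\ge0$, where $d_n=(x_n,y_n,t_n,\alpha_n)$. Extracting $s_n\to s^{*}\le\ell$ and $\alpha_n\rightharpoonup^{*}\alpha^{*}$, continuous dependence gives $I^{d^{*}}(s^{*}+a)\le\mu$ for each fixed $a\ge0$ with $d^{*}=(x,y,t,\alpha^{*})$, so $\underline{u}(x,y,t)\le\underline{u}^{\alpha^{*}}(x,y,t)\le s^{*}\le\ell$. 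For the upper semicontinuity of $\overline{u}$ one argues in the same spirit — fix $\varepsilon$, take $\alpha$ with $\overline{u}^{\alpha}(x,y,t)<\overline{u}(x,y,t)+\varepsilon=:T$, estimate $\overline{u}(x_n,y_n,t_n)\le\overline{u}^{\alpha}(x_n,y_n,t_n)$, and pass to the limit — but now the eradication time is a \emph{supremum}, and a priori it could blow up along the sequence; ruling this out is the crux, and it is where the SIR structure must be used: $S^{d}$ is nonincreasing, so either $S^{d_n}$ eventually drops below $\underline{\gamma}/\overline{\beta}$, after which $I^{d_n}$ is monotone decreasing and its last crossing of $\mu$ is controlled, or $S^{d_n}$ stays bounded below, in which case integrating $\dot S^{d}=-\beta^{t}S^{d}I^{d}-\alpha S^{d}$ bounds $\int_0^{\infty}I^{d_n}$, and together with the uniform estimate $I^{d_n}(s)\ge I^{d_n}(\sigma)e^{-L|s-\sigma|}$ this keeps $\{s:I^{d_n}(s)\ge\mu\}$, hence its supremum, bounded along the sequence. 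With blow-up excluded, the extraction-and-limit argument applies as before.

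For the viscosity part, the basic identity is, for small $h>0$ and any $\alpha$,
\begin{equation*}
\overline{u}^{\alpha}(x,y,t)=h+\overline{u}^{\alpha(\cdot+h)}\!\big(S^{d}(h),I^{d}(h),t+h\big),
\end{equation*}
coming from the semigroup property of the flow and $I^{d}(0)=y\ge\mu$ (for $\underline{u}$ one also needs $I^{d}>\mu$ on $[0,h]$, valid uniformly in $\alpha$ for small $h$ since $y>\mu$ and $|\dot I/I|$ is bounded); taking the infimum over $\alpha$ — the tail control ranging over all of $\mathcal{A}$ whatever $\alpha|_{[0,h]}$ is — yields the DPP $\overline{u}(x,y,t)=\inf_{\alpha}\big(h+\overline{u}(S^{d}(h),I^{d}(h),t+h)\big)$, and similarly for $\underline{u}$. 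Then, at an interior local maximum of $\overline{u}-\phi$ (normalized so the values agree), the DPP with a constant control $a\in[0,1]$ gives $\phi(x_0,y_0,t_0)\le h+\phi(S^{d}(h),I^{d}(h),t_0+h)$; dividing by $h$, letting $h\downarrow0$ with $\dot S=-\beta(t_0)x_0y_0-ax_0$ and $\dot I=\beta(t_0)x_0y_0-\gamma(t_0)y_0$, and maximizing over $a$, gives \eqref{eqn:main} with ``$\le$'', i.e.\ the subsolution inequality. Dually, at an interior local minimum of $\underline{u}-\phi$, the DPP with an $h^{2}$-optimal control $\alpha_h$ gives $\phi(x_0,y_0,t_0)-\phi(S^{d}(h),I^{d}(h),t_0+h)\ge h-h^{2}$; writing the left side as $-\int_0^{h}\tfrac{d}{ds}\phi\,ds$ along the flow and passing $h\downarrow0$ along a subsequence with $\tfrac1h\int_0^{h}\alpha_h\to a^{*}\in[0,1]$ gives $-\partial_t\phi+(\beta x_0y_0+a^{*}x_0)\partial_x\phi+(\gamma-\beta x_0)y_0\partial_y\phi\ge1$ at $(x_0,y_0,t_0)$, and since $a^{*}x_0\partial_x\phi\le x_0(\partial_x\phi)_+$ this is \eqref{eqn:main} with ``$\ge$''. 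The DPP-to-equation step is routine; the genuine obstacle is the upper semicontinuity of $\overline{u}$, which hinges on the a priori control of the SIR flow — monotonicity of $S$, the consequent eventual monotonicity of $I$, and the $L^{1}$ bound on $I$ — sketched above.
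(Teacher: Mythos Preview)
Your skeleton matches the paper's exactly: stability of the flow under $d_n\to d$, semicontinuity of the value functions, DPP, then the sub/supersolution tests. Two places differ in execution. First, for the supersolution you use $h^2$-optimal controls and extract $\tfrac1h\int_0^h\alpha_h\to a^\ast$, whereas the paper invokes the existence of an \emph{exact} optimizer $\alpha^\ast$ for $\underline u$ (their Proposition on optimal controls) and runs the DPP along that single flow; both are standard and your route has the mild advantage of not needing optimal-control existence. Second, for the upper semicontinuity of $\overline u$ the paper simply cites the time-independent argument of Hynd--Ikpe--Pendleton for the lemma $\limsup_k\overline u^{\alpha_k}(x_k,y_k,t_k)\le\overline u^{\alpha}(x,y,t)$, while you spell out an SIR-specific blow-up exclusion. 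Your dichotomy there is slightly loose: in Case~1 you assert the last crossing is ``controlled'' once $S^{d_n}$ drops below $\underline\gamma/\overline\beta$, but the hitting time of that level is itself uncontrolled as written. The clean fix is that your Case~2 mechanism works unconditionally --- integrating $\dot S^{d_n}+\dot I^{d_n}\le -\underline\gamma\,I^{d_n}$ gives $\int_0^\infty I^{d_n}\le (x_n+y_n)/\underline\gamma$ for \emph{every} control, and together with your two-sided bound $I^{d_n}(s)\ge I^{d_n}(\sigma)e^{-L|s-\sigma|}$ this bounds $\sup\{s:I^{d_n}(s)\ge\mu\}$ uniformly in $n$ without any case split.
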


The functions $\overline{u},\underline{u}$ are natural in this aspect. However, Figure \ref{fig:two_eradication} indicates the discrepancy of $\overline{u}$ and $\underline{u}$, meaning we might not have $\overline{u}=\underline{u}$. This ambiguity is not observed in the time-independent SIR model studied in \cite{hynd2021eradication}.

\begin{figure}[h]
    \centering
    \begin{tabular}{cc}\includegraphics[width=.8\textwidth,clip=false,trim=0.3cm 0.1cm 0.1cm 0.1cm  ]{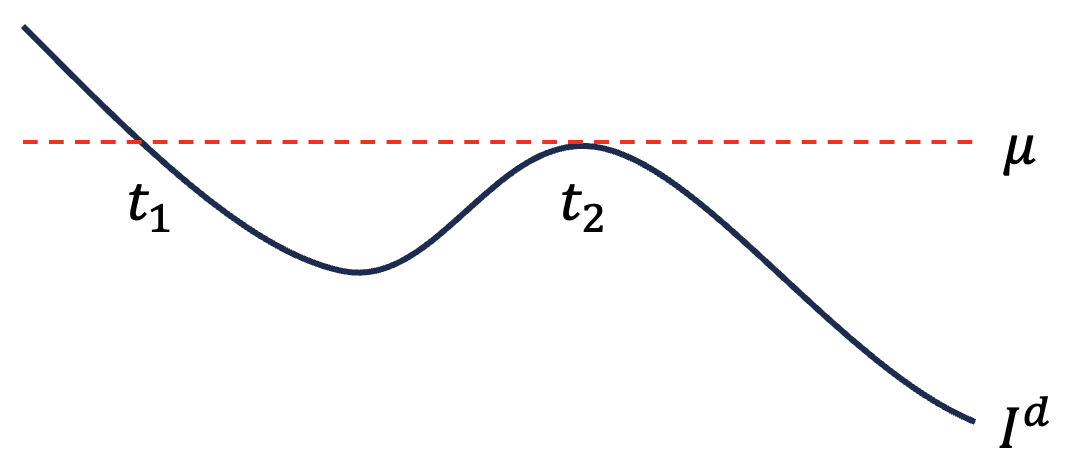}&
    \end{tabular}
    \caption{Two types of eradication time}\label{fig:two_eradication}
\end{figure}

However, we can resolve this ambiguity by taking the following viewpoint; for an initial infected population $I(0)$ that is noticeable, say greater than or equal to $\mu_0$, we require the threshold $\mu$ be much smaller, depending on $\mu_0$ (for instance, at least smaller than $\mu_0$). This perspective aligns with the practical goal of vaccination intervention, aiming to control the spread of disease within the population, especially when controlling the number of infectious individuals under a small threshold.

We start with a fixed $\mu_0>0$. The next result states that for $\mu\in(0,\mu_0]$ small enough, we have $\overline{u}=\underline{u}$, which now becomes a viscosity solution to \eqref{eqn:main} in $(0,\infty)\times(\mu_0,\infty)\times(0,\infty)$. Also, the value function $u:=\overline{u}=\underline{u}$ is characterized by its boundary value conditions. Note that we only assume $\underline{\beta}\leq\beta(t)\leq\overline{\beta}$, $\underline{\gamma}\leq\gamma(t)\leq\overline{\gamma}$ for $t\geq0$, allowing an oscillatory behavior.





\begin{thm}\label{thm:mu}
There exists $\mu_1\in(0,\mu_0]$ depending only on $\mu_0,\beta,\gamma$ such that for every $\mu\in(0.\mu_1]$, it holds that $\overline{u}=\underline{u}$ on $[0,\infty)\times[\mu_0,\infty)\times[0,\infty)$, and therefore, $u:=\overline{u}=\underline{u}$ is a viscosity solution to \eqref{eqn:main} in $(0,\infty)\times(\mu_0,\infty)\times(0,\infty)$. Moreover, if $v$ is a nonnegative viscosity solution to \eqref{eqn:main} and satisfies the boundary conditions
\begin{equation*}
\begin{cases}
v(x,\mu_0,t)=u(x,\mu_0,t) \quad &\text{for}\quad(x,t)\in \left[0,\overline{\gamma}/\underline{\beta}\right]\times [0,\infty),\\
v(0,y,t)=u(0,y,t) \quad &\text{for}\quad (y,t)\in[\mu_0,\infty)\times[0,\infty),\\
v(x,y,0)=u(x,y,0)\quad&\text{for}\quad (x,y)\in\left[0,\overline{\gamma}/\underline{\beta}\right]\times [\mu_0,\infty),
\end{cases}
\end{equation*}
then we have $v=u$.
\end{thm}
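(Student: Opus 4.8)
The plan is to split the argument into two parts: (i) showing $\overline{u}=\underline{u}$ for $\mu$ small enough, and (ii) proving the comparison/uniqueness statement characterizing $u$ by its boundary data.

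For part (i), the obvious inequality is $\underline{u}^\alpha\geq\overline{u}^\alpha$ pointwise (if $I^d(s+a)\leq\mu$ for all $a\geq0$, then in particular $s\geq\sup\{r:I^d(r)\geq\mu\}$ once we account for the endpoint), hence $\underline{u}\geq\overline{u}$ always. The content is the reverse inequality, and here is where the choice of $\mu$ enters. First I would establish an a priori bound showing that along \emph{any} admissible trajectory with $I(0)\geq\mu_0$, once $I$ drops below a level $\mu_1=\mu_1(\mu_0,\underline\beta,\overline\beta,\underline\gamma,\overline\gamma)$ it can never climb back to $\mu_0$: the key observation is that $\dot I=(\beta(t)S-\gamma(t))I$ and $\dot S\leq -\alpha(t)S\leq 0$, so $S$ is nonincreasing; moreover from the $S$-equation $S(t)\leq S(0)e^{-\int_0^t\beta I}$, and one can control $\int_0^\infty \beta(s)I(s)\,ds$ in terms of $S(0)$ and the drop in $S$. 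A cleaner route is the classical SIR first integral adapted to time-varying rates: one shows $S$ decreases at a rate that forces $I$ to be eventually bounded by a quantity going to $0$ as its past minimum goes to $0$. Concretely, I expect to prove: there is $c>0$ with $I(t)\leq \mu_0$ for all $t\geq\tau$ whenever $I(\tau)\leq\mu_1$ and $\mu_1$ is small enough, using a Gronwall-type estimate on $\log I$ together with the monotonicity of $S$ and a compactness argument over $\alpha\in\mathcal A$ (which is compact in the weak$^\ast$ topology, and along which the flow depends continuously — this continuous dependence should already be available from the proof of Theorem \ref{thm:sub/supersolution}). Given such a $\mu_1$, for $\mu\leq\mu_1$ and $I(0)\geq\mu_0$ the first time $I$ reaches $\mu$ is automatically a time after which $I$ stays $\leq\mu_0$; one then needs the sharper statement that it stays $\leq\mu$, which requires iterating: once below $\mu$, apply the same bound with $\mu$ in place of $\mu_0$ to see $I$ cannot return to $\mu$. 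This yields $\overline{u}^\alpha=\underline{u}^\alpha$ for each such $\alpha$, hence $\overline u=\underline u$ on $[0,\infty)\times[\mu_0,\infty)\times[0,\infty)$, and by Theorem \ref{thm:sub/supersolution} the common function $u$ is simultaneously a sub- and supersolution, i.e.\ a viscosity solution, in $(0,\infty)\times(\mu_0,\infty)\times(0,\infty)$.

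For part (ii), the comparison principle, I would run the standard doubling-of-variables argument for the time-dependent Hamilton--Jacobi equation \eqref{eqn:main}. Let $v$ be a nonnegative viscosity solution with the prescribed boundary data; we want $v=u$. Suppose $\sup(u-v)>0$ (and symmetrically $\sup(v-u)>0$) on the closed region $\overline{\mathcal R}$ with $\mathcal R=(0,\infty)\times(\mu_0,\infty)\times(0,\infty)$, restricted to the relevant compact set $[0,\overline\gamma/\underline\beta]\times[\mu_0,\infty)\times[0,\infty)$ where the dynamics live (note $S$ stays in $[0,\overline\gamma/\underline\beta]$ after the critical region, matching the boundary strip in the theorem). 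Double the spatial and time variables with penalization $\frac{1}{2\varepsilon}(|x-x'|^2+|y-y'|^2+|t-t'|^2)$, add a small linear tilt in $t$ (or a $-\eta/t$ or $+\eta t$ term) to push the maximum into the interior or onto the specified boundary pieces, and use the viscosity inequalities to derive a contradiction, exploiting that the Hamiltonian $H(x,y,t,p_x,p_y,p_t)=-p_t+\beta(t)xy\,p_x+x(p_x)_+ +(\gamma(t)-\beta(t)x)y\,p_y-1$ is Lipschitz in $(x,y)$ uniformly on the compact set and the $x(p_x)_+$ term is monotone (nondecreasing in $u$ in the right sense after the tilt, since $(p_x)_+\geq0$). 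The boundary conditions are exactly what is needed so that the penalized maximum cannot escape to $y=\mu_0$, to $x=0$, or to $t=0$ — on each of those faces $u=v$ — while the faces $x=\overline\gamma/\underline\beta$ (not reached by trajectories) and $y\to\infty$, $t\to\infty$ are handled by the finite-speed/confinement of the dynamics and the growth bounds on $u$ and $v$ (linear in $t$, via the obvious estimate that eradication time is finite and controlled).

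The main obstacle I anticipate is part (i): proving the uniform-in-$\alpha$ ``no return'' estimate that pins down $\mu_1$. The difficulty is the oscillation of $\beta(t),\gamma(t)$ — $I$ genuinely can rebound, so one cannot use monotonicity of $I$ as in the constant-coefficient case \cite{hynd2021eradication}. The saving grace is that $S$ is always nonincreasing and $\dot S\le -\beta(t)S I$, so the \emph{total} amount of ``infection pressure'' $\int_0^\infty\beta(t)S(t)I(t)\,dt\le S(0)\le \overline\gamma/\underline\beta$ is bounded independently of everything; combined with $\frac{d}{dt}\log I=\beta(t)S-\gamma(t)\le\overline\beta\,S(t)$ one gets that $\log I$ can increase by at most a fixed total budget after the last time $S$ was large, which forces $\mu_1$ to exist. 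Making this quantitative and uniform over the compact control set $\mathcal A$ — rather than just for each fixed $\alpha$ — is the technical heart, and I would handle it by a contradiction/compactness argument: if no such $\mu_1$ worked, extract a sequence $\alpha_n$, times $\tau_n$, with $I^{\alpha_n}(\tau_n)\to 0$ but $I^{\alpha_n}$ later returning to $\mu_0$, pass to a weak$^\ast$ limit, and contradict the infection-budget bound in the limiting trajectory.
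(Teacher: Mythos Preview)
Your proposal has real gaps in both parts.

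\textbf{Part (i).} First, the easy inequality goes the other way: one always has $\underline u^\alpha\le\overline u^\alpha$ (at $s=\overline u^\alpha$ one has $I(s+a)\le\mu$ for all $a\ge0$, so $s$ lies in the set defining $\underline u^\alpha$). More importantly, your ``infection budget'' argument does not close. From $\dot S=-\beta SI-\alpha S$ you correctly get $\int_0^\infty\beta S I\,ds\le S(0)$, but what controls the growth of $\log I$ is $\int\beta S\,ds$, not $\int\beta SI\,ds$, and the former is \emph{not} bounded (take $\alpha\equiv0$ and $I$ small: $S$ stays near $S(0)$ and $\int_0^T\beta S\to\infty$). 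Your iteration idea (``apply the same bound with $\mu$ in place of $\mu_0$'') is circular, since the putative $\mu_1$ depends on the starting level. Your compactness fallback is closer to the mark but, as stated, does not exclude the bad case $\tau_n\to\infty$. The paper's route is quite different and cleaner: it observes that $\underline u^\alpha<\overline u^\alpha$ forces $\dot I=0$ at level $\mu$, and that $\dot I=0$ can only occur while $S\in[\underline\gamma/\overline\beta,\overline\gamma/\underline\beta]$; since $S$ is strictly decreasing, one reduces to bounding the \emph{exit time} $T(x_0)=\inf\{t:S(t)\le\underline\gamma/\overline\beta\}$ uniformly, and then $I(t)\ge\mu_0 e^{-\overline\gamma T}$ on $[0,T]$ gives $\mu_1$ explicitly. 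The compactness enters only to bound $T$, via continuity of $T$ on the compact interval $[\underline\gamma/\overline\beta,\overline\gamma/\underline\beta]$ (and, implicitly, over controls).

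\textbf{Part (ii).} Two devices the paper uses are absent from your sketch and are not optional. First, $u$ is unbounded (eradication times blow up), so the paper passes to $e^{-u}$, which is bounded and solves a transformed equation with a zeroth-order term; without this, your doubling argument has no localization in the values. Second, the boundary data are prescribed only on $\{y=\mu_0,\ x\le\overline\gamma/\underline\beta\}$ and $\{t=0,\ x\le\overline\gamma/\underline\beta\}$, not on the whole face; the comparison must therefore prevent the penalized maximum from landing on $\{y=\mu_0,\ x>\overline\gamma/\underline\beta\}$. The paper accomplishes this with a barrier $w(x,y,t)=\tfrac12\mu_0 t+x+y+g(x)/(y-\mu_0)$, where $g\ge0$ vanishes on $[0,\overline\gamma/\underline\beta]$ and is positive beyond, so that $w=+\infty$ on the forbidden boundary piece while remaining a strict supersolution of the transformed equation there. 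Your remark that ``$S$ stays in $[0,\overline\gamma/\underline\beta]$ after the critical region'' is a statement about trajectories, not about the PDE domain $(0,\infty)\times(\mu_0,\infty)\times(0,\infty)$, and does not by itself justify restricting the comparison to that strip. Adding these two ingredients --- the exponential change of unknown and a barrier that blows up on the uncontrolled portion of $\partial\Omega$ --- is exactly what makes the doubling-of-variables argument go through here.
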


\begin{figure}[h]
    \centering
    \begin{tabular}{cc}\includegraphics[width=.8\textwidth,clip=false,trim=0.3cm 0.1cm 0.1cm 0.1cm  ]{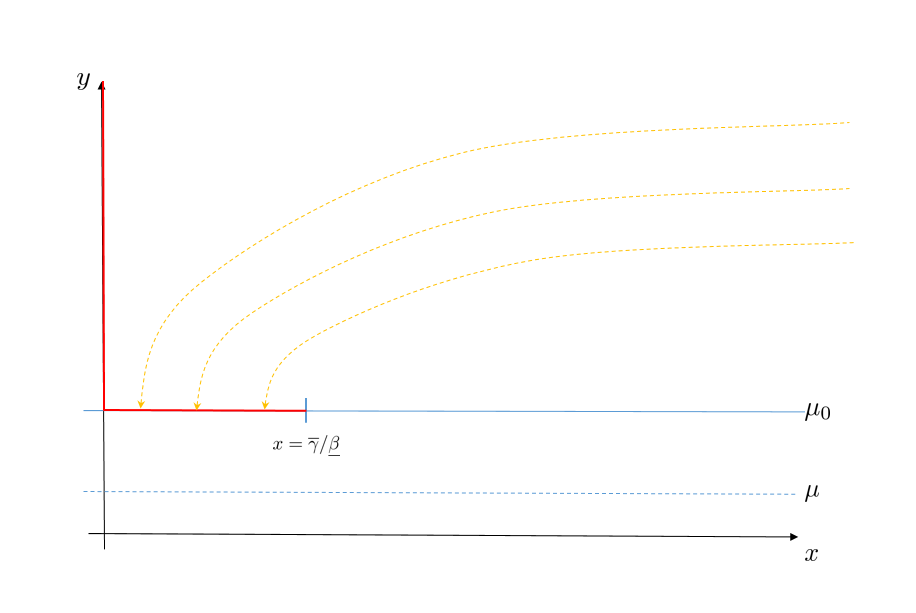}&
    \end{tabular}
    \caption{Effective boundary projected onto the $xy$-plane (the red part)}\label{fig:effectiveboundary}
\end{figure}

Although \eqref{eqn:main} has $-\partial_tu$ instead of $\partial_tu$, the notion of viscosity solutions to \eqref{eqn:main} is the same as the usual one to forward Cauchy problems, for which we refer to \cite[Chapter 1]{Tran2021HJtheory}.

It is worth noting that the Hamiltonian
$$
H(t,x,y,p,q)=\beta(t)xyp+xp_++(\gamma(t)-\beta(t)x)yq
$$
is positively homogeneous of degree 1, and thus, \eqref{eqn:main} has a hidden underlying front propagation structure \cite{Tran2021HJtheory}. Furthermore, only a part of the boundary is needed for the uniqueness result in Theorem \ref{thm:mu}. The front propagation nature and the boundary condition deserve further study.

We state a further regularity property when the transmission rate $\beta$ and the recovery rate $\gamma$ become constant in a small time.

\begin{thm}\label{thm:semiconcavity}
Let $\mu\in(0,\mu_0]$ be chosen as in Theorem \ref{thm:mu} so that $\overline{u}=\underline{u}(=u)$. Suppose that $\beta(t)\equiv\beta_0$, $\gamma(t)\equiv\gamma_0$ for all $t\geq T:=\frac{1}{2\overline{\gamma}}\log\left(\frac{\mu_0}{\mu}\right)$ for some constants $\beta_0,\gamma_0$. Then, $u(x,y,t)$ is locally semiconcave in $(0,\infty)\times(\mu_0,\infty)\times(0,\infty)$.
\end{thm}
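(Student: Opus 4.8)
The plan is to reduce Theorem~\ref{thm:semiconcavity} to the semiconcavity of the value function of the \emph{time-independent} SIR eradication problem, and then to propagate that regularity backwards over the bounded interval $[0,T]$ on which the rates are still time-varying, using the smooth dependence of the SIR flow on its initial data.

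\textbf{Step 1 (no premature eradication on $[0,T]$).} Since $\dot I^d=\beta^tS^dI^d-\gamma^tI^d\geq-\gamma^tI^d\geq-\overline{\gamma}\,I^d$, Grönwall's inequality gives $I^d(s)\geq y\,e^{-\overline{\gamma}s}$ for every datum $d=(x,y,t,\alpha)\in\mathcal{D}$. Hence, whenever $y\geq\mu_0$ and $0\leq s\leq T=\frac{1}{2\overline{\gamma}}\log(\mu_0/\mu)$, one has $I^d(s)\geq\mu_0e^{-\overline{\gamma}T}=\sqrt{\mu\mu_0}>\mu$; in particular the trajectory is not eradicated before time $T$, regardless of the control. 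This is exactly the role of the factor $2\overline{\gamma}$ in $T$: it builds in the strict safety margin $\sqrt{\mu\mu_0}$ between the running infected population and the threshold $\mu$.

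\textbf{Step 2 (reduction to the constant-rate problem).} Let $u_0$ be the minimum eradication time for the SIR model with the constant rates $\beta_0,\gamma_0$. For constant rates $\beta_0 S-\gamma_0$ is strictly decreasing along trajectories, so $I$ is monotone or single-peaked; thus $\overline{u}_0=\underline{u}_0=u_0$ with no ambiguity, and by \cite{hynd2021eradication} $u_0$ is a continuous viscosity solution of \eqref{eqn:main} (with constant coefficients) on $(0,\infty)\times(\mu,\infty)$, in particular locally Lipschitz there. Since $\beta(T+\cdot)\equiv\beta_0$ and $\gamma(T+\cdot)\equiv\gamma_0$, the flow started at any $t\geq T$ is the constant-rate flow, so $u(x,y,t)=u_0(x,y)$ for all $t\geq T$. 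For $t<T$, set $h:=T-t\in(0,T]$; combining Step~1 with the dynamic programming principle for $u=\overline{u}=\underline{u}$ (valid by Theorem~\ref{thm:mu}) yields, for $y>\mu_0$,
\begin{equation*}
u(x,y,t)=h+\inf_{\alpha\in\mathcal{A}}u_0\big(S^{d}(h),\,I^{d}(h)\big),\qquad d=(x,y,t,\alpha).
\end{equation*}
The restriction $y>\mu_0$ is what makes this identity clean: by Step~1 no eradication can occur on $[0,h]$, so the two notions $\overline{u},\underline{u}$ cannot diverge along that stretch.

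\textbf{Step 3 (semiconcavity of $u_0$, the analytic core).} One first argues that the constant control $\alpha\equiv1$ is optimal for the time-independent problem — a larger vaccination rate only lowers $S$, hence lowers $\dot I=(\beta_0S-\gamma_0)I$, thereby both accelerating and sustaining eradication; equivalently, $\partial_xu_0\geq0$, so the minimizing $\alpha$ in the term $x(\partial_xu)_+$ of \eqref{eqn:main} is $\alpha=1$. Then $(S,I)$ solves the autonomous, control-free, real-analytic system $\dot S=-S(\beta_0I+1)$, $\dot I=(\beta_0S-\gamma_0)I$, and $u_0(x,y)$ is the first hitting time of the target $\Gamma=\{I=\mu,\ \beta_0S\leq\gamma_0\}$, which is forward invariant for $\{I\leq\mu\}$ because $S$ is strictly decreasing. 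Away from the relative boundary $\{I=\mu,\ \beta_0S=\gamma_0\}$ the flow meets $\Gamma$ transversally, so $u_0\in C^\infty$ there by the implicit function theorem; across that lower-dimensional locus the hitting time can jump only \emph{upward} on the side where the trajectory grazes $\{I=\mu\}$, so the singularities of $u_0$ are concave corners, which are compatible with semiconcavity. Packaging this — or invoking the Cannarsa–Sinestrari theory for minimum-time functions to a $C^2$ target met transversally, whose interior-ball/controllability hypotheses are readily checked here — shows $u_0$ is locally semiconcave on $(0,\infty)\times(\mu,\infty)$.

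\textbf{Step 4 (propagation and conclusion).} Fix a compact $K\subset(0,\infty)\times(\mu_0,\infty)\times(0,\infty)$. For $(x,y,t)\in K$ with $t<T$ and any $\alpha\in\mathcal{A}$, Step~1, the monotonicity $\dot S^d\leq0$, and the exponential a priori bound for $I^d$ on the bounded interval $[0,T]$ confine the endpoint $\Psi^\alpha_h(x,y):=(S^d(h),I^d(h))$ to a fixed compact $K'\subset(0,\infty)\times(\mu,\infty)$ (here $x$ bounded away from $0$ keeps $S^d$ bounded away from $0$). Because the vector field is polynomial in $(S,I)$ and affine with bounded coefficients in $\alpha$, the first and second variational equations together with Grönwall give $C^2$ bounds for $(x,y)\mapsto\Psi^\alpha_h(x,y)$ that are uniform in $\alpha\in\mathcal{A}$ and $h\in[0,T]$, with affine (hence $C^\infty$) dependence on $h=T-t$. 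Then $u_0\circ\Psi^\alpha_h$ is locally semiconcave with a modulus depending only on the Lipschitz and semiconcavity constants of $u_0$ on $K'$ and on the above $C^2$ bounds — uniformly in $\alpha$ — since the composition of a locally Lipschitz, locally semiconcave function with a $C^2$ map is locally semiconcave. Taking the infimum over $\alpha$ preserves the modulus, adding the affine term $h=T-t$ is harmless, and the representation matches $u=u_0$ across $t=T$ with a common local modulus; hence $u$ is locally semiconcave on $(0,\infty)\times(\mu_0,\infty)\times(0,\infty)$. The main obstacle is Step~3: establishing (rather than quoting) the semiconcavity of $u_0$, in particular that the ``remains below $\mu$ afterward'' requirement produces only concave corners; a secondary technical point is the uniformity over all of $\mathcal{A}$ and over $h\in[0,T]$ of the $C^2$ estimates for the initial-data-to-endpoint map and the clean gluing of the two representations of $u$ at $t=T$.
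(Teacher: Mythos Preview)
Your overall strategy coincides with the paper's: Step~1 (no eradication before $T$), Step~2 (DPP reduction to $u_0:=u(\cdot,\cdot,T)$), and Step~4 (propagation by composing the semiconcave $u_0$ with the smooth flow map, uniformly in $\alpha$) are exactly what the paper does. In fact the paper's direct second-difference computation --- fix the optimal control $\alpha^\ast$ at the center point, use it suboptimally at $\pm h$, and invoke the flow estimates $|z_+-z_-|\leq c|h|$ and $|z_++z_--2z|\leq c|h|^2$ together with the local Lipschitz and semiconcavity of $u_0$ --- is precisely the concrete proof of the abstract ``semiconcave $\circ\ C^2$ is semiconcave; infimum preserves the modulus'' fact you invoke. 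The only substantive difference is Step~3: the paper does not reprove the semiconcavity of $u_0$ but simply quotes it from \cite{hynd2021eradication} (recorded here as Theorem~\ref{thm:terminaldata}).

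Your sketch of Step~3, however, contains a real gap. The claim that $\alpha\equiv1$ is always optimal for the constant-rate problem, argued via ``larger $\alpha$ only lowers $S$, hence lowers $\dot I$'', is not justified: the coupling is two-way (vaccination lowers $S$, which lowers $I$, which in turn \emph{slows} the infection-driven depletion $-\beta SI$ of $S$), so the comparison is not monotone in the naive sense. Indeed the paper's own literature review cites Bolzoni et al.\ for the result that the constant-rate minimum-eradication-time optimal control can be of switching form --- do nothing on $[0,t_\ast]$, then vaccinate maximally --- with $t_\ast>0$ in general. Once $\alpha\equiv1$ fails, your transversality/implicit-function argument for the hitting time of a single analytic flow no longer applies. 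Your fallback to Cannarsa--Sinestrari minimum-time theory is the right instinct, but the controllability and target-regularity hypotheses there are not automatic for this system and would need to be checked; the paper sidesteps all of this by citing \cite{hynd2021eradication} directly, which is also what you should do.
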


\subsection*{Organization of the paper.}
The paper is organized as follows. In Section \ref{sec:sub/super}, we review the basic results of the flow \eqref{eq:translated} and prove Theorem \ref{thm:sub/supersolution}. Section \ref{sec:solution} is entirely devoted to the proof of the existence of $\mu\in(0,\mu_0]$ satisfying $\overline{u}=\underline{u}$ on $[0,\infty)\times[\mu_0,\infty)\times[0,\infty)$. In Section \ref{sec:furtherproperties}, we complete the proof of Theorem \ref{thm:mu} by verifying the uniqueness (Theorem \ref{prop:uniqueness}), and we also prove Theorem \ref{thm:semiconcavity}.

\section{Properties of $\overline{u}$ and $\underline{u}$}\label{sec:sub/super}

In this section, we go over the basic properties of the flow of \eqref{eq:translated}. Then, we investigate the semicontinuity of the value functions $\overline{u},\ \underline{u}$. Finally, we check the dynamic programming principle and viscosity sub/supersolution tests. The main reference is \cite{hynd2021eradication}, and we skip similar proofs. The properties coming from the split of $\overline{u}$ and $\underline{u}$ will be explained.

\subsection{Flow of \eqref{eq:translated}}
\begin{lem}\label{lem:finitespeed}
For any $d=(x,y,t,\alpha)\in\mathcal{D}$, there is a unique flow $(S,I)$ associated with $d$. The flow $(S,I)$ is Lipschitz continuous. Namely, we have
\begin{align*}
|\dot{S}|,|\dot{I}|\leq \overline{\beta}(x+y)^2+\max\{1,\overline{\gamma}\}(x+y).
\end{align*}
\end{lem}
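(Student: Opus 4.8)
The plan is to establish existence, uniqueness, and the stated Lipschitz bound for the flow of \eqref{eq:translated} by a standard ODE argument, with the only real care needed in confirming that solutions stay in the physically relevant region so that the a priori bound can be closed. Since the right-hand side of \eqref{eq:translated} is locally Lipschitz in $(S,I)$ (it is a polynomial in $(S,I)$ with coefficients $\beta^t,\gamma^t,\alpha^t$ that are bounded measurable in time), the Carath\'eodory existence-uniqueness theorem gives a unique local absolutely continuous solution on some maximal interval of existence. First I would record the basic sign/monotonicity facts: starting from $x\ge 0$, $y\ge\mu>0$, one checks $S\ge 0$ and $I>0$ are preserved — the $S$-equation gives $\dot S = -(\beta^t I+\alpha^t)S$, so $S(s)=x\exp\!\left(-\int_0^s(\beta^t I+\alpha^t)\,\de r\right)\ge 0$, and similarly $I(s)=y\exp\!\left(\int_0^s(\beta^t S-\gamma^t)\,\de r\right)>0$. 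Moreover $\dot S\le 0$, so $0\le S(s)\le x$ for all $s$, and $\dot I=(\beta^t S-\gamma^t)I\le \beta^t S I\le \overline{\beta}xI$, which by Gr\"onwall keeps $I$ finite on any bounded interval; hence the maximal interval is all of $[0,\infty)$ and the flow is global.

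Next I would use these bounds to derive the claimed Lipschitz estimate. Once we know $0\le S(s)\le x$ and $0<I(s)$, the harder half is bounding $I$ from above by a quantity of the form $x+y$ uniformly in $s$; here I would invoke the conserved/decreasing structure of the SIR flow, namely that $S+I$ is non-increasing. Indeed $\dot S+\dot I = -\alpha^t S-\gamma^t I\le 0$, so $S(s)+I(s)\le x+y$ for all $s\ge 0$, and in particular $0\le S(s)\le x+y$ and $0< I(s)\le x+y$. With $S,I\in[0,x+y]$ in hand, the bound on the derivatives is immediate from the equations: $|\dot S|=|\beta^t S I+\alpha^t S|\le \overline{\beta}SI+S\le \overline{\beta}(x+y)^2+(x+y)$, and $|\dot I|=|\beta^t S I-\gamma^t I|\le \overline{\beta}SI+\overline{\gamma}I\le \overline{\beta}(x+y)^2+\overline{\gamma}(x+y)$. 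Taking the larger of the two constants $1,\overline{\gamma}$ yields the stated bound $|\dot S|,|\dot I|\le \overline{\beta}(x+y)^2+\max\{1,\overline{\gamma}\}(x+y)$, and a uniform bound on the derivative of an absolutely continuous function gives Lipschitz continuity of $(S,I)$.

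The main obstacle, such as it is, is purely one of bookkeeping: making sure the a priori invariants ($S\ge 0$, $I>0$, $S+I\le x+y$) are genuinely propagated by Carath\'eodory solutions rather than assumed, and that this is done before — not after — invoking global existence, so the argument is not circular. This is handled cleanly by the explicit exponential formulas above (valid a.e.\ and hence everywhere by absolute continuity), which show the invariants hold on the maximal interval, which in turn forces that interval to be $[0,\infty)$. I expect this lemma's proof to be short, and since the paper says it follows \cite{hynd2021eradication} and skips similar proofs, I would present the monotonicity of $S$ and of $S+I$ as the two key observations and then read off the derivative bound directly.
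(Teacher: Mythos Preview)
Your proposal is correct and follows exactly the standard argument the paper defers to \cite{hynd2021eradication}: Carath\'eodory local existence, the invariants $S\ge 0$, $I>0$, and $S+I\le x+y$ (from $\dot S+\dot I=-\alpha^t S-\gamma^t I\le 0$), and then reading off the derivative bound. The paper omits the proof entirely, so there is nothing further to compare.
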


\begin{lem}\label{lem:limitzero}
For any $d=(x,y,t,\alpha)\in\mathcal{D}$, the associated flow $(S,I)$ satisfies $\lim_{t\to\infty}I(t)=0$.
\end{lem}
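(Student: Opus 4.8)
The plan is to exploit the fact that $S+I$ is an almost-conserved quantity, dissipated precisely through the vaccination and recovery terms, which forces $I$ to be integrable in time; integrability is then upgraded to decay via the Lipschitz bound of Lemma~\ref{lem:finitespeed}.

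First I would record the sign and monotonicity structure of the flow. Writing the $I$-equation as $\dot I=(\beta^t S-\gamma^t)I$ and the $S$-equation as $\dot S=-(\beta^t I+\alpha^t)S$, integration gives $I(s)=y\exp\!\big(\int_0^s(\beta^t S-\gamma^t)\,d\tau\big)>0$ (recall $y\geq\mu>0$) and $S(s)=x\exp\!\big(-\int_0^s(\beta^t I+\alpha^t)\,d\tau\big)\geq0$ for all $s\geq0$; in particular $S$ is non-increasing. Next I would compute
\[
\frac{d}{ds}\big(S(s)+I(s)\big)=-\alpha^t(s)S(s)-\gamma^t(s)I(s)\leq0,
\]
so $S+I$ is non-increasing on $[0,\infty)$ and hence $0\leq S(s)+I(s)\leq x+y$ for all $s\geq0$.

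Second, integrating the previous identity over $[0,T]$ yields
\[
\int_0^T\gamma^t(s)I(s)\,ds\leq\int_0^T\big(\alpha^t(s)S(s)+\gamma^t(s)I(s)\big)\,ds=(x+y)-S(T)-I(T)\leq x+y.
\]
Since $\gamma^t(s)=\gamma(s+t)\geq\underline{\gamma}>0$, letting $T\to\infty$ gives $\int_0^\infty I(s)\,ds\leq(x+y)/\underline{\gamma}<\infty$. Finally, I would invoke the elementary fact that a nonnegative, globally Lipschitz function on $[0,\infty)$ with finite integral tends to $0$: by Lemma~\ref{lem:finitespeed} combined with the bound $S+I\leq x+y$ just obtained, $|\dot I(s)|\leq\overline{\beta}(x+y)^2+\max\{1,\overline{\gamma}\}(x+y)=:L$ for a.e.\ $s$; if $I(s_n)\geq\varepsilon$ along some $s_n\to\infty$, then $I\geq\varepsilon/2$ on an interval of length $\varepsilon/(2L)$ around each $s_n$, and, passing to a subsequence so these intervals are disjoint, this contradicts integrability. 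Hence $\lim_{s\to\infty}I(s)=0$. (Alternatively: $S$ and $S+I$ are non-increasing and bounded below by $0$, hence converge, so $I=(S+I)-S$ converges to some $I_\infty\geq0$; then $I_\infty>0$ contradicts $\int_0^\infty I<\infty$.)

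I do not expect a serious obstacle; the only real content is identifying the dissipation identity for $S+I$, and the one point to handle with care is that the Lipschitz constant in Lemma~\ref{lem:finitespeed} is phrased in terms of the initial data, so one must first use $S+I\leq x+y$ to see that this bound persists for all later times.
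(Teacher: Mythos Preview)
Your argument is correct. The paper does not actually supply a proof of this lemma: at the start of Section~\ref{sec:sub/super} it states that the basic properties of the flow are taken from \cite{hynd2021eradication} and that similar proofs are skipped, and Lemma~\ref{lem:limitzero} is simply stated without proof. Your approach---the dissipation identity $\frac{d}{ds}(S+I)=-\alpha^t S-\gamma^t I\leq 0$, integrability of $I$ via the lower bound $\gamma^t\geq\underline{\gamma}$, and then the Barbalat-type step using the global Lipschitz bound---is the standard one and is essentially what the reference does in the constant-coefficient case; nothing changes here since only the bounds $\underline{\gamma}\leq\gamma^t\leq\overline{\gamma}$ are used. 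One small remark: the Lipschitz bound in Lemma~\ref{lem:finitespeed} is already stated uniformly in time in terms of the initial data $(x,y)$, so you can invoke it directly without first re-deriving $S+I\leq x+y$ (though of course that monotonicity is what underlies the lemma).
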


\begin{prop}\label{prop:stability}
Let $d_k=(x_k,y_k,t_k,\alpha_k)\in\mathcal{D}$ and $(S_k,I_k)$ be the associated flow for each $k=0,1,2,\cdots$. Suppose that $d_k\to d_0$ as $k\rightarrow\infty$ in $\mathcal{D}$. Then, $(S_k(t),I_k(t))\rightarrow(S_0(t),I_0(t))$ as $k\rightarrow\infty$ locally uniformly in $t\in[0,\infty)$.
\end{prop}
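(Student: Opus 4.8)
The plan is a standard compactness argument for stability under relaxed (weak$^{\ast}$) controls. First I would record uniform a priori bounds. From the structure of \eqref{eq:translated} (as already used in Lemma \ref{lem:finitespeed}), each flow stays nonnegative, and
\[
\frac{\de}{\de s}\big(S_k(s)+I_k(s)\big)=-\alpha_k^{t_k}(s)S_k(s)-\gamma^{t_k}(s)I_k(s)\le 0,
\]
so $0\le S_k(s),I_k(s)\le x_k+y_k$ for all $s\ge0$. Since $d_k\to d_0$ in $\mathcal{D}$, the numbers $x_k+y_k$ are bounded by some $M<\infty$, and then Lemma \ref{lem:finitespeed} gives a uniform Lipschitz constant $L:=\overline{\beta}M^2+\max\{1,\overline{\gamma}\}M$ for every $(S_k,I_k)$ on $[0,\infty)$. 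By Arzel\`a--Ascoli on each interval $[0,R]$ together with a diagonal extraction over $R\in\N$, every subsequence of $\{(S_k,I_k)\}$ has a further subsequence converging locally uniformly on $[0,\infty)$ to some Lipschitz pair $(\tilde S,\tilde I)$.

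Next I would identify the limit by passing to the limit in the integral (Volterra) form of \eqref{eq:translated},
\[
S_k(s)=x_k-\int_0^s\big(\beta(r+t_k)S_k(r)I_k(r)+\alpha_k(r+t_k)S_k(r)\big)\,\de r,
\]
and the analogous identity for $I_k$. The $\beta$- and $\gamma$-terms cause no trouble: $\beta$ and $\gamma$ are uniformly continuous on compact sets, so $\beta(\cdot+t_k)\to\beta(\cdot+t_0)$ and $\gamma(\cdot+t_k)\to\gamma(\cdot+t_0)$ uniformly on $[0,R]$, and combined with the uniform convergence $S_kI_k\to\tilde S\tilde I$ and $I_k\to\tilde I$ on $[0,R]$ these integrals converge to the expected limits. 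For the control term I would first upgrade the weak$^{\ast}$ convergence of $\alpha_k$ to weak$^{\ast}$ convergence of the \emph{shifted} controls $\alpha_k^{t_k}\to\alpha_0^{t_0}$ in $L^\infty([0,\infty))$: for $\phi\in L^1([0,\infty))$,
\[
\int_0^\infty\alpha_k(r+t_k)\phi(r)\,\de r=\int_0^\infty\alpha_k(\tau)\,\mathbf{1}_{[t_k,\infty)}(\tau)\,\phi(\tau-t_k)\,\de\tau,
\]
and the test functions $\mathbf{1}_{[t_k,\infty)}\phi(\cdot-t_k)$ converge to $\mathbf{1}_{[t_0,\infty)}\phi(\cdot-t_0)$ in $L^1$ by continuity of translation in $L^1$; since $\|\alpha_k\|_\infty\le1$ and $\alpha_k\to\alpha_0$ weak$^{\ast}$, the claim follows. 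Then, splitting $\int_0^s\alpha_k^{t_k}S_k=\int_0^s\alpha_k^{t_k}(S_k-\tilde S)+\int_0^s\alpha_k^{t_k}\tilde S$, the first term is bounded by $s\,\|S_k-\tilde S\|_{L^\infty([0,s])}\to0$, and the second converges to $\int_0^s\alpha_0^{t_0}\tilde S$ because $\tilde S\,\mathbf{1}_{[0,s]}\in L^1$. Passing to the limit, $(\tilde S,\tilde I)$ satisfies the Volterra form of \eqref{eq:translated} with datum $d_0$, hence $(\tilde S,\tilde I)=(S_0,I_0)$ by the uniqueness part of Lemma \ref{lem:finitespeed}.

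Since the subsequential limit is always $(S_0,I_0)$, independent of the chosen subsequence, the usual "a subsequence of every subsequence" argument shows the full sequence $(S_k,I_k)$ converges to $(S_0,I_0)$ locally uniformly on $[0,\infty)$, which is the assertion.

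The main obstacle is the control term, where one must reconcile three effects at once: the weak$^{\ast}$ convergence of $\alpha_k$, the simultaneous time-shift by $t_k\to t_0$, and the multiplication by the (only uniformly, not strongly in any dual sense) convergent factor $S_k$. The clean route, as above, is to first absorb the time-shift into the test function and promote $\alpha_k^{t_k}\rightharpoonup\alpha_0^{t_0}$ weak$^{\ast}$, after which pairing against $S_k$ is routine; everything else reduces to Arzel\`a--Ascoli and uniform continuity of $\beta,\gamma$.
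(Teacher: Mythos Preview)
The paper does not actually supply a proof of this proposition; it cites \cite{hynd2021eradication} as the main reference and explicitly skips proofs that are similar to those there. Your argument is correct and is precisely the expected one: uniform a priori bounds from the monotonicity of $S+I$, equi-Lipschitz continuity via Lemma~\ref{lem:finitespeed}, Arzel\`a--Ascoli plus diagonalization, passage to the limit in the Volterra formulation, and identification of the limit via uniqueness.

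The one step that genuinely goes beyond what \cite{hynd2021eradication} would contain (since that reference treats constant $\beta,\gamma$ and hence no time shift in the datum) is your treatment of the simultaneous shift $t_k\to t_0$ in both the coefficients and the control. For $\beta^{t_k},\gamma^{t_k}$ this is handled by uniform continuity on compacts, as you say. For the control, your reduction---absorbing the shift into the $L^1$ test function via $\int_0^\infty\alpha_k(r+t_k)\phi(r)\,\de r=\int_0^\infty\alpha_k(\tau)\phi(\tau-t_k)\mathbf{1}_{[t_k,\infty)}(\tau)\,\de\tau$ and then using continuity of translation in $L^1$ together with $\|\alpha_k\|_\infty\le1$---is the clean way to upgrade $\alpha_k\overset{\ast}{\rightharpoonup}\alpha_0$ to $\alpha_k^{t_k}\overset{\ast}{\rightharpoonup}\alpha_0^{t_0}$. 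After that, the splitting $\int_0^s\alpha_k^{t_k}S_k=\int_0^s\alpha_k^{t_k}(S_k-\tilde S)+\int_0^s\alpha_k^{t_k}\tilde S$ is routine. This is exactly the extra ingredient the time-dependent setting requires, and you have supplied it correctly.
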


Now, we state the existence of an optimal control associated with the value function $\underline{u}$. As this property is expected for $\overline{u}$, we give a proof.

\begin{prop}\label{prop:optimalcontrol}
For any $(x,y,t)\in[0,\infty)\times[\mu,\infty)\times[0,\infty)$, there exists $\alpha^{\ast}\in\mathcal{A}$ such that
$$
\underline{u}^{\alpha^{\ast}}(x,y,t)\leq\underline{u}^{\alpha}(x,y,t)
$$
for any $\alpha\in\mathcal{A}$.
\end{prop}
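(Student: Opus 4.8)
The plan is to establish existence of an optimal control for $\underline{u}$ by the direct method in the calculus of variations: take a minimizing sequence of controls, extract a weak$^*$-convergent subsequence using compactness of $\mathcal{A}$, and show the value functional $\alpha \mapsto \underline{u}^\alpha(x,y,t)$ is lower semicontinuous along this sequence, so that the limit is a minimizer.

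First I would fix $(x,y,t)$ and pick $\alpha_k \in \mathcal{A}$ with $\underline{u}^{\alpha_k}(x,y,t) \to \underline{u}(x,y,t)$. Since $\mathcal{A}$ is a bounded subset of $L^\infty([0,\infty))$ — indeed $0\le\alpha\le1$ a.e. — it is weak$^*$ sequentially compact (Banach--Alaoglu together with separability of $L^1$), so after passing to a subsequence $\alpha_k \rightharpoonup^* \alpha^*$ for some $\alpha^* \in \mathcal{A}$ (the constraint set $\{0\le\alpha\le1\}$ is weak$^*$ closed, being convex and strongly closed). Set $d_k = (x,y,t,\alpha_k)$ and $d^* = (x,y,t,\alpha^*)$; by the topology on $\mathcal{D}$ we have $d_k \to d^*$, so Proposition~\ref{prop:stability} gives $(S_k, I_k) \to (S^*, I^*)$ locally uniformly on $[0,\infty)$.

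The core step is the lower semicontinuity inequality $\underline{u}^{\alpha^*}(x,y,t) \le \liminf_k \underline{u}^{\alpha_k}(x,y,t)$. Write $L := \liminf_k \underline{u}^{\alpha_k}(x,y,t)$ and, passing to a further subsequence, assume $\underline{u}^{\alpha_k}(x,y,t) \to L$. By definition of $\underline{u}^{\alpha_k}$ as the infimum of the closed set $\{s \ge 0 : I_k(s+a) \le \mu \ \forall a \ge 0\}$, we have $I_k(s) \le \mu$ for all $s \ge \underline{u}^{\alpha_k}(x,y,t)$. Now fix any $a > 0$; for $k$ large enough $\underline{u}^{\alpha_k}(x,y,t) < L + a$, hence $I_k(L+a) \le \mu$, and letting $k\to\infty$ via the locally uniform convergence yields $I^*(L+a) \le \mu$. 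Since $a>0$ was arbitrary, $I^*(s) \le \mu$ for all $s > L$; if $I^*(L) > \mu$ one can still conclude $\underline u^{\alpha^*}(x,y,t)\le L$ directly from the definition, since the defining condition only requires $I^*(s+a)\le\mu$ for \emph{all} $a\ge0$ starting from some $s$, and any $s>L$ works, so taking the infimum gives $\underline{u}^{\alpha^*}(x,y,t) \le L$. Combined with $L = \underline{u}(x,y,t) \le \underline{u}^{\alpha^*}(x,y,t)$, this forces equality, so $\alpha^*$ is the desired minimizer.

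The main obstacle is the lower semicontinuity argument, specifically the fact that $\underline{u}^{\alpha_k}$ is defined via a "remains below forever" condition rather than a single crossing time: one must be careful that passing to the limit gives $I^* \le \mu$ on an interval of the form $(L,\infty)$, and that this suffices to bound $\underline{u}^{\alpha^*}$ by $L$ even without control of $I^*(L)$ itself. A secondary point worth checking is that $L < \infty$, i.e. that the minimizing sequence has bounded values — this follows because $\inf_\alpha \underline u^\alpha$ is finite: by Lemma~\ref{lem:limitzero}, for any single admissible control (say $\alpha\equiv0$) the flow satisfies $I(s)\to0$, but one must verify $I$ eventually stays below $\mu$; this requires knowing $I$ cannot oscillate back above $\mu$ infinitely often for a fixed control, which for the uncontrolled or fully-controlled flow follows from the structure of \eqref{eq:translated} together with $\lim_{s\to\infty} I(s) = 0$. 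Once finiteness of the value is in hand, the compactness-plus-lower-semicontinuity scheme closes the proof.
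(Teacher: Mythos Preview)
Your proposal is correct and follows essentially the same approach as the paper: take a minimizing sequence, extract a weak$^*$-convergent subsequence via sequential Banach--Alaoglu, invoke the stability of the flow (Proposition~\ref{prop:stability}) to pass to the limit in $I_k(\underline{u}^{\alpha_k}+a)\le\mu$, and conclude $\underline{u}^{\alpha^*}\le L$. The only differences are cosmetic: the paper uses $a\ge0$ and passes to the limit at the moving point $\underline{u}^{\alpha_{k_j}}+a$ directly, while you fix $a>0$ and argue via $\underline{u}^{\alpha_k}<L+a$ for large $k$; and your worry about $L<\infty$ is slightly overcomplicated, since $\lim_{s\to\infty}I(s)=0$ from Lemma~\ref{lem:limitzero} already forces $I(s)<\mu$ for all large $s$, giving finiteness of $\underline{u}^\alpha$ for every $\alpha$ without any oscillation argument.
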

\begin{proof}
Choose a sequence $\{\alpha_k\}_{k=1,2,\cdots}$ in $\mathcal{A}$ such that $\inf_{\alpha\in\mathcal{A}}\underline{u}^{\alpha}(x,y,t)=\lim_{k\to\infty}\underline{u}^{\alpha_k}(x,y,t)$. As the space $\mathcal{A}$ is (sequentially) weak$^{\ast}$ compact, there is a subsequence $\{\alpha_{k_j}\}_{j=1,2,\cdots}$ of $\{\alpha_k\}_{k=1,2,\cdots}$ such that $\alpha_{k_j}\rightarrow\alpha_0$ weak$^{\ast}$ for some $\alpha_0\in\mathcal{A}$ as $j\to\infty$.

For each $j=0,1,2,\cdots$, let $(S_j,I_j)$ be the flow associated with the datum $(x,y,t,\alpha_{k_j})\in\mathcal{D}$ with $k_0:=0$ Let $a\geq0$. Then, by the definition of $\underline{u}^{\alpha}$ with general control $\alpha\in\mathcal{A}$, we have $I_j(\underline{u}^{\alpha_{k_j}}+a)\leq\mu$ for all $j=1,2,\cdots$. Taking the limit $j\to\infty$, we obtain that $I_0(\inf_{\alpha\in\mathcal{A}}\underline{u}^{\alpha}+a)\leq\mu$. Since $a\in[0,\infty)$ was arbitrary, we conclude $\underline{u}^{\alpha_0}(x,y,t)\leq\inf_{\alpha\in\mathcal{A}}\underline{u}^{\alpha}(x,y,t)$ from the definition of $\underline{u}^{\alpha_0}(x,y,t)$.
\end{proof}

\subsection{Semicontinuity}

The following states the semicontinuity of $\underline{u}^{\alpha}(x,y,t),\overline{u}^{\alpha}(x,y,t)$.

\begin{lem}
Let $d_k=(x_k,y_k,t_k,\alpha_k)\in\mathcal{D}$ for $k=1,2,\cdots$. Suppose that $d_k\to d$ as $k\rightarrow\infty$ in $\mathcal{D}$ for some $d=(x,y,t,\alpha)\in\mathcal{D}$. Then,
$$
\underline{u}^{\alpha}(x,y,t)\leq\liminf_{k\to\infty}\underline{u}^{\alpha_k}(x_k,y_k,t_k)
$$
and
$$
\limsup_{k\to\infty}\overline{u}^{\alpha_k}(x_k,y_k,t_k)\leq\overline{u}^{\alpha}(x,y,t).
$$
\end{lem}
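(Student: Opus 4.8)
The plan is to prove the two semicontinuity statements by exploiting the locally uniform convergence of flows (Proposition~\ref{prop:stability}) together with the monotone/decay structure of $I$ (Lemma~\ref{lem:limitzero}), treating the $\underline{u}$ and $\overline{u}$ statements separately since their definitions have opposite extremal structure.

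For the lower semicontinuity of $\underline{u}^{\alpha}$: set $\ell:=\liminf_{k\to\infty}\underline{u}^{\alpha_k}(x_k,y_k,t_k)$ and, passing to a subsequence without relabeling, assume $\underline{u}^{\alpha_k}(x_k,y_k,t_k)\to\ell$. If $\ell=\infty$ there is nothing to prove, so assume $\ell<\infty$. I want to show $I^d(\ell+a)\le\mu$ for every $a\ge0$, which by definition of $\underline{u}^{\alpha}$ forces $\underline{u}^{\alpha}(x,y,t)\le\ell$. Fix $a\ge0$. For large $k$ we have $\underline{u}^{\alpha_k}(x_k,y_k,t_k)+a$ close to $\ell+a$, and by the definition of $\underline{u}^{\alpha_k}$ we know $I_k(\underline{u}^{\alpha_k}(x_k,y_k,t_k)+a')\le\mu$ for all $a'\ge0$; choosing $a'$ so that $\underline{u}^{\alpha_k}(x_k,y_k,t_k)+a' $ sits at a point converging to $\ell+a$ (e.g.\ take $a'=a+\ell-\underline{u}^{\alpha_k}(\cdots)$ when this is $\ge0$, and handle the other sign by taking $a'$ slightly larger), and then invoking the locally uniform convergence $I_k\to I^d$ from Proposition~\ref{prop:stability}, I pass to the limit to get $I^d(\ell+a)\le\mu$. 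Since $a\ge0$ was arbitrary, $\underline{u}^{\alpha}(x,y,t)\le\ell$, as desired.

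For the upper semicontinuity of $\overline{u}^{\alpha}$: set $L:=\limsup_{k\to\infty}\overline{u}^{\alpha_k}(x_k,y_k,t_k)$ and pass to a subsequence achieving the limsup. First one should note $L<\infty$: indeed $\overline{u}^{\alpha_k}(x_k,y_k,t_k)\le\overline{u}^{\mathbf{1}}(x_k,y_k,t_k)$ (using the inactive-versus-maximal-control comparison, or the uniform bound available from the $\alpha_0\equiv1$ estimate) which stays bounded as $d_k\to d$, so the limsup is finite. Now suppose for contradiction that $L>\overline{u}^{\alpha}(x,y,t)=:\tau$. Pick $s$ with $\tau<s<L$ that is \emph{not} in the closure of the set $\{r:I^d(r)=\mu\}$ in a suitable sense, or simply pick $s\in(\tau,L)$; by definition of $\overline{u}^{\alpha}$ and since $I^d$ is continuous with $I^d\to0$, we have $I^d(r)<\mu$ for all $r>\tau$, hence $I^d(s)<\mu$, so $I^d(s)\le\mu-\varepsilon$ for some $\varepsilon>0$. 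Along the subsequence $\overline{u}^{\alpha_k}(x_k,y_k,t_k)>s$ for infinitely many $k$, so there is $s_k\ge s$ with $I_k(s_k)\ge\mu$; but by local uniform convergence and the Lipschitz bound of Lemma~\ref{lem:finitespeed}, the $s_k$ may be taken in a compact set, extract a further subsequence $s_k\to s_\infty\ge s$, and pass to the limit to get $I^d(s_\infty)\ge\mu$, contradicting $I^d(r)<\mu$ for $r>\tau$. Hence $L\le\overline{u}^{\alpha}(x,y,t)$.

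The main obstacle is the bookkeeping in the $\underline{u}$ argument: because $\underline{u}^{\alpha}$ is defined through an ``$\inf$ over starting points that work for all later times,'' one cannot simply evaluate $I_k$ at a single converging sequence of times—one must verify the inequality $I^d(\ell+a)\le\mu$ simultaneously for every $a\ge0$, and the shift parameters $a'$ used for $I_k$ must be chosen carefully (and nonnegatively) so that $\underline{u}^{\alpha_k}(\cdots)+a'$ converges to $\ell+a$ from the correct side. The compactness needed to keep the relevant times in a bounded set (so that ``locally uniform'' convergence applies) should follow from the finiteness of $\ell$ and $L$ plus Lemmas~\ref{lem:finitespeed} and~\ref{lem:limitzero}; everything else is a routine limit passage.
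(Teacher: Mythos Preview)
Your overall strategy—passing to subsequences and invoking the locally uniform convergence of flows from Proposition~\ref{prop:stability}—is the standard one, and it is what the paper has in mind (the paper omits the proof entirely, citing the constant-coefficient reference). Your $\underline{u}$ argument is essentially correct: after passing to a subsequence realizing $\ell<\infty$, for each fixed $a>0$ one eventually has $\underline{u}^{\alpha_k}(x_k,y_k,t_k)<\ell+a$, so $\ell+a=\underline{u}^{\alpha_k}(x_k,y_k,t_k)+a'$ with $a'\ge0$, giving $I_k(\ell+a)\le\mu$ and hence $I^d(\ell+a)\le\mu$ in the limit; the endpoint $a=0$ then follows by continuity of $I^d$. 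This matches the argument the paper uses in the closely related Proposition~\ref{prop:optimalcontrol}.

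The $\overline{u}$ argument, however, has a genuine gap at the step where you assert $L<\infty$. The inequality $\overline{u}^{\alpha_k}(x_k,y_k,t_k)\le\overline{u}^{\mathbf{1}}(x_k,y_k,t_k)$ is backwards: $\alpha\equiv 1$ is \emph{maximal} vaccination, which makes $S$ decay fastest and hence drives $I$ below $\mu$ soonest, so $\overline{u}^{\mathbf{1}}$ is if anything a \emph{lower} bound for $\overline{u}^{\alpha_k}$, not an upper bound. The Lipschitz estimate of Lemma~\ref{lem:finitespeed} is likewise of no help in confining your times $s_k$ to a compact interval. Your contradiction once $L<\infty$ is granted is fine—and can be streamlined by taking $s_k=\overline{u}^{\alpha_k}(x_k,y_k,t_k)$ itself, noting $I_k(s_k)=\mu$, and letting $s_k\to L$ to get $I^d(L)\ge\mu$ with $L>\tau$. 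But ruling out $L=\infty$ requires a separate argument: since $I_k<\mu$ on $[\tau+\varepsilon,M]$ for large $k$ (by locally uniform convergence), a later time with $I_k\ge\mu$ would force $\dot I_k>0$ somewhere past $M$, hence $S_k(M)>\underline{\gamma}/\overline{\beta}$, hence $S^d(M)\ge\underline{\gamma}/\overline{\beta}$ for every $M$; one then has to argue this is incompatible with the flow, using that $S$ is monotone and that $I$ is strictly decreasing once $S<\underline{\gamma}/\overline{\beta}$. In the constant-coefficient setting of the cited reference this step is automatic because $S_\infty<\gamma/\beta$ always, which is presumably why the paper calls the proof ``identical''; in the present time-varying setting you should not skip it.
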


We skip the proof, as it is identical to that of \cite[Corollary 3.2]{hynd2021eradication}. Now, we prove the semicontinuity of $\underline{u}(x,y,t),\overline{u}(x,y,t)$.

\begin{prop}
The value function $\underline{u}$ is lower semicontinuous on $[0,\infty)\times[\mu,\infty)\times[0,\infty)$. Also, the value function $\overline{u}$ is upper semicontinuous on $[0,\infty)\times[\mu,\infty)\times[0,\infty)$.
\end{prop}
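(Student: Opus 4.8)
The plan is to deduce the semicontinuity of the value functions $\underline{u}$ and $\overline{u}$ from the already-established semicontinuity of $\underline{u}^{\alpha}$ and $\overline{u}^{\alpha}$ (the preceding lemma), combined with the existence of an optimal control for $\underline{u}$ (Proposition \ref{prop:optimalcontrol}) and the weak$^{\ast}$ compactness of $\mathcal{A}$. The two cases are not quite symmetric: for $\underline{u}$, which is an infimum of lower semicontinuous functions and is actually attained, the argument needs the optimal control; for $\overline{u}$, which is an infimum of upper semicontinuous functions, one only needs to compare against a fixed competitor control.

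First I would treat $\overline{u}$. Fix $(x,y,t)\in[0,\infty)\times[\mu,\infty)\times[0,\infty)$ and a sequence $(x_k,y_k,t_k)\to(x,y,t)$. Let $\varepsilon>0$ and pick $\alpha\in\mathcal{A}$ with $\overline{u}^{\alpha}(x,y,t)\leq\overline{u}(x,y,t)+\varepsilon$. Then the datum $d_k=(x_k,y_k,t_k,\alpha)$ (same control $\alpha$) converges to $d=(x,y,t,\alpha)$ in $\mathcal{D}$, so by the preceding lemma $\limsup_k\overline{u}^{\alpha}(x_k,y_k,t_k)\leq\overline{u}^{\alpha}(x,y,t)$. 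Since $\overline{u}(x_k,y_k,t_k)\leq\overline{u}^{\alpha}(x_k,y_k,t_k)$ for every $k$, taking $\limsup$ gives $\limsup_k\overline{u}(x_k,y_k,t_k)\leq\overline{u}^{\alpha}(x,y,t)\leq\overline{u}(x,y,t)+\varepsilon$, and letting $\varepsilon\to0$ yields upper semicontinuity.

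For $\underline{u}$, fix $(x_k,y_k,t_k)\to(x,y,t)$ and pass to a subsequence (not relabeled) realizing $\liminf_k\underline{u}(x_k,y_k,t_k)$ as a genuine limit $L$. By Proposition \ref{prop:optimalcontrol}, for each $k$ there is an optimal control $\alpha_k\in\mathcal{A}$ with $\underline{u}^{\alpha_k}(x_k,y_k,t_k)=\underline{u}(x_k,y_k,t_k)$. By weak$^{\ast}$ sequential compactness of $\mathcal{A}$, pass to a further subsequence so that $\alpha_k\to\alpha_0$ weak$^{\ast}$ for some $\alpha_0\in\mathcal{A}$; then $d_k=(x_k,y_k,t_k,\alpha_k)\to d_0=(x,y,t,\alpha_0)$ in $\mathcal{D}$. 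The preceding lemma gives $\underline{u}^{\alpha_0}(x,y,t)\leq\liminf_k\underline{u}^{\alpha_k}(x_k,y_k,t_k)=L$. Since $\underline{u}(x,y,t)\leq\underline{u}^{\alpha_0}(x,y,t)$, we obtain $\underline{u}(x,y,t)\leq L=\liminf_k\underline{u}(x_k,y_k,t_k)$, which is lower semicontinuity.

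The only genuinely delicate point is the use of Proposition \ref{prop:optimalcontrol} together with the product topology on $\mathcal{D}$: one must make sure that the convergence $(x_k,y_k,t_k)\to(x,y,t)$ in the first three coordinates and $\alpha_k\to\alpha_0$ weak$^{\ast}$ in the fourth indeed amounts to $d_k\to d_0$ in $\mathcal{D}$, which is exactly the product-topology definition set up in Subsection \ref{subsec:notation}, and that $\liminf$ can be extracted along a subsequence as a limit before applying the lemma. Everything else is a routine chase through the definitions, so no further computation is needed.
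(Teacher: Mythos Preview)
Your proposal is correct and follows essentially the same approach as the paper: for $\overline{u}$ you fix an $\varepsilon$-optimal competitor control and apply the upper semicontinuity lemma, while for $\underline{u}$ you take optimal controls $\alpha_k$ via Proposition~\ref{prop:optimalcontrol}, extract a weak$^{\ast}$ convergent subsequence, and apply the lower semicontinuity lemma. The paper's proof is identical in structure, including the order of subsequence extractions and the final chain of inequalities.
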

\begin{proof}
Say $(x_k,y_k,t_k)\rightarrow(x,y,t)$ as $k\rightarrow\infty$ in $[0,\infty)\times[\mu,\infty)\times[0,\infty)$. For each $k=1,2,\cdots,$ choose $\alpha_k\in\mathcal{A}$ such that $\underline{u}(x_k,y_k,t_k)=\underline{u}^{\alpha_k}(x_k,y_k,t_k)$, which is possible due to Proposition \ref{prop:optimalcontrol}. Select a subsequence $\{\alpha_{k_j}\}_{j}$ such that $\liminf_{k\to\infty}\underline{u}(x_k,y_k,t_k)=\lim_{j\to\infty}\underline{u}^{\alpha_{k_j}}(x_{k_j},y_{k_j},t_{k_j})$ and that $\alpha_{k_j}\rightarrow\alpha$ weak$^{\ast}$ as $j\rightarrow\infty$ for some $\alpha\in\mathcal{A}$. Then, by the above lemma, we get
$$
\liminf_{k\to\infty}\underline{u}(x_k,y_k,t_k)=\lim_{j\to\infty}\underline{u}^{\alpha_{k_j}}(x_{k_j},y_{k_j},t_{k_j})\geq\underline{u}^{\alpha}(x,y,t)\geq\underline{u}(x,y,t). 
$$

Let $\delta>0$ be given. Then, we can choose $\alpha\in\mathcal{A}$ such that $\overline{u}^{\alpha}(x,y,t)<\overline{u}(x,y,t)+\delta$. For a sequence $(x_k,y_k,t_k)$ that converges to $(x,y,t)$ in $[0,\infty)\times[\mu,\infty)\times[0,\infty)$, we have
$$
\limsup_{k\to\infty}\overline{u}(x_k,y_k,t_k)\leq\limsup_{k\to\infty}\overline{u}^{\alpha}(x_k,y_k,t_k)\leq\overline{u}^{\alpha}(x,y,t)<\overline{u}(x,y,t)+\delta.
$$
Here we used the above lemma. Letting $\delta\rightarrow0$ gives the upper semicontinuity.
\end{proof}


\subsection{Dynamic programming principle and a viscosity sub/supersolution}
For $d=(x,y,s,\alpha)\in\mathcal{D}$, we have the dynamic programming principle: for $t\in[0,\inf\{t_1\geq0:I^d(t_1)=\mu\}]$, we have
\begin{align*}
\overline{u}^{\alpha}(x,y,s)&=t+\overline{u}^{\alpha}(S^d(t),I^d(t),t+s),\\
\underline{u}^{\alpha}(x,y,s)&=t+\underline{u}^{\alpha}(S^d(t),I^d(t),t+s).
\end{align*}
This is also true for $\overline{u},\underline{u}$, as stated in the next proposition.

\begin{prop}\label{prop:dpp}
Let $x\geq0,y\geq\mu,s\geq0$. If $t\in[0,\inf_{\alpha\in\mathcal{A}}\inf\{t_1\geq0:I^d(t_1)=\mu\}]$ with $d=(x,y,s,\alpha)$,
\begin{align*}
\overline{u}(x,y,s)&=\inf_{\alpha\in\mathcal{A}}\left\{t+\overline{u}(S^d(t),I^d(t),t+s)\right\},\\
\underline{u}(x,y,s)&=\inf_{\alpha\in\mathcal{A}}\left\{t+\underline{u}(S^d(t),I^d(t),t+s)\right\}.
\end{align*}
Moreover, for any control $\alpha^{\ast}\in\mathcal{A}$ such that $\underline{u}(x,y,s)=\underline{u}^{\alpha^{\ast}}(x,y,s)$, we have
$$
\underline{u}(x,y,s)=t+\underline{u}(S^{\ast}(t),I^{\ast}(t),t+s)
$$
and
$$
\underline{u}(S^{\ast}(t),I^{\ast}(t),t+s)=\underline{u}^{\alpha^{\ast}}(S^{\ast}(t),I^{\ast}(t),t+s).
$$
for $t\in[0,\inf\{t_1\geq0:I^{\ast}(t_1)=\mu\}]$. Here, the flow $(S^{\ast},I^{\ast})$ is associated with $(x,y,s,\alpha^{\ast})\in\mathcal{D}$.
\end{prop}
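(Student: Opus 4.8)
The plan is to follow the standard dynamic programming argument, adapted to the two value functions, and then extract the extra identities along an optimal control. Fix $x\ge0$, $y\ge\mu$, $s\ge0$, and $t$ in the stated range, so that $I^d(t)\ge\mu$ along \emph{every} $\alpha\in\mathcal{A}$, with $d=(x,y,s,\alpha)$; in particular $t$ lies in the admissible range appearing in the single-control DPP recalled just before the proposition. I would first prove the $\le$ inequality for $\overline{u}$: for any $\alpha\in\mathcal{A}$ the single-control identity $\overline{u}^{\alpha}(x,y,s)=t+\overline{u}^{\alpha}(S^d(t),I^d(t),t+s)\ge t+\overline{u}(S^d(t),I^d(t),t+s)$ holds, and taking $\inf$ over $\alpha$ on the left gives $\overline{u}(x,y,s)\ge\inf_{\alpha}\{t+\overline{u}(S^d(t),I^d(t),t+s)\}$. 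For the reverse inequality, given $\varepsilon>0$ pick $\alpha_1$ nearly optimal for $\overline{u}(x,y,s)$; then by the single-control identity and the trivial bound $\overline{u}^{\alpha_1}\ge\overline{u}$ at the later time, $\inf_{\alpha}\{t+\overline{u}(S^d(t),I^d(t),t+s)\}\le t+\overline{u}(S^{d_1}(t),I^{d_1}(t),t+s)\le t+\overline{u}^{\alpha_1}(S^{d_1}(t),I^{d_1}(t),t+s)=\overline{u}^{\alpha_1}(x,y,s)\le\overline{u}(x,y,s)+\varepsilon$. Letting $\varepsilon\to0$ closes the loop. The same two-sided argument applies verbatim to $\underline{u}$, using its single-control identity.

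For the refined statements along an optimal control, let $\alpha^{\ast}$ satisfy $\underline{u}(x,y,s)=\underline{u}^{\alpha^{\ast}}(x,y,s)$; such a control exists by Proposition \ref{prop:optimalcontrol}. The single-control DPP gives $\underline{u}(x,y,s)=\underline{u}^{\alpha^{\ast}}(x,y,s)=t+\underline{u}^{\alpha^{\ast}}(S^{\ast}(t),I^{\ast}(t),t+s)\ge t+\underline{u}(S^{\ast}(t),I^{\ast}(t),t+s)$. On the other hand, the DPP just proved for $\underline{u}$ yields $\underline{u}(x,y,s)\le t+\underline{u}(S^{\ast}(t),I^{\ast}(t),t+s)$ by taking the competitor $\alpha^{\ast}$ in the infimum (here I use that $(S^{\ast},I^{\ast})$ is exactly the flow of $(x,y,s,\alpha^{\ast})$). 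Combining the two inequalities forces equality, $\underline{u}(x,y,s)=t+\underline{u}(S^{\ast}(t),I^{\ast}(t),t+s)$, and then comparing with $\underline{u}(x,y,s)=t+\underline{u}^{\alpha^{\ast}}(S^{\ast}(t),I^{\ast}(t),t+s)$ together with $\underline{u}\le\underline{u}^{\alpha^{\ast}}$ gives $\underline{u}(S^{\ast}(t),I^{\ast}(t),t+s)=\underline{u}^{\alpha^{\ast}}(S^{\ast}(t),I^{\ast}(t),t+s)$.

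I expect the main subtlety to be bookkeeping the admissible range of $t$ and the concatenation of controls: one must check that the flow restarted at $(S^d(t),I^d(t),t+s)$ with the \emph{tail} control $\alpha(\cdot+t)$ is consistent with the translated dynamics \eqref{eq:translated}, i.e.\ that $\beta^{t+s}=(\beta^{s})^{t}$ and likewise for $\gamma,\alpha$, and that $I^{d}(t_1)\ge\mu$ for all $t_1\le t$ so that neither supremum/infimum in the definitions of $\overline{u}^{\alpha},\underline{u}^{\alpha}$ has yet been ``triggered.'' This is precisely the content of the hypothesis $t\in[0,\inf_{\alpha}\inf\{t_1\ge0:I^{d}(t_1)=\mu\}]$, so once the translation identities for the coefficients are recorded, the argument is routine and parallels \cite[Lemma 3.3]{hynd2021eradication}.
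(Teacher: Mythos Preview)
Your two chains for $\overline{u}$ establish the \emph{same} inequality. The first chain correctly gives $\overline{u}(x,y,s)\ge\inf_{\alpha}\{t+\overline{u}(S^d(t),I^d(t),t+s)\}$ (the ``$\le$'' label is a slip). The second chain, starting from an $\alpha_1$ nearly optimal for $\overline{u}(x,y,s)$, ends with $\inf_{\alpha}\{\ldots\}\le\overline{u}(x,y,s)+\varepsilon$, which is again $\overline{u}(x,y,s)\ge\inf_{\alpha}\{\ldots\}$. The genuine reverse direction, namely $\overline{u}(x,y,s)\le t+\overline{u}(S^d(t),I^d(t),t+s)$ for \emph{every} fixed $\alpha$, is never argued, and it is exactly where the concatenation you relegate to a bookkeeping remark must be used: given $\alpha$ and $\varepsilon>0$, pick $\alpha_2$ nearly optimal for $\overline{u}(S^d(t),I^d(t),t+s)$, define $\tilde\alpha\in\mathcal{A}$ by $\tilde\alpha=\alpha$ on $[0,s+t]$ and $\tilde\alpha=\alpha_2$ on $(s+t,\infty)$, and apply the single-control identity to $\tilde\alpha$ to obtain
\[
\overline{u}(x,y,s)\le\overline{u}^{\tilde\alpha}(x,y,s)=t+\overline{u}^{\alpha_2}(S^d(t),I^d(t),t+s)\le t+\overline{u}(S^d(t),I^d(t),t+s)+\varepsilon.
\]
The same correction is needed for $\underline{u}$. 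This is precisely the standard step the paper defers to in \cite[Proposition~3.4]{hynd2021eradication}; once concatenation is made the substance of the missing direction rather than a side comment, the first part is complete.

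Your derivation of the identities along an optimal $\alpha^{\ast}$ is then fine, with one caveat: the proposition allows $t$ up to $\inf\{t_1\ge0:I^{\ast}(t_1)=\mu\}$, which can exceed the uniform bound $\inf_{\alpha}\inf\{t_1:I^d(t_1)=\mu\}$ governing the first part. So for such $t$ you cannot simply quote the DPP for $\underline{u}$ to get $\underline{u}(x,y,s)\le t+\underline{u}(S^{\ast}(t),I^{\ast}(t),t+s)$; obtain it directly by the same concatenation, gluing $\alpha^{\ast}$ on $[0,s+t]$ with a near-optimal tail at $(S^{\ast}(t),I^{\ast}(t),t+s)$.
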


We omit the proof as it is basically the same as that of \cite[Proposition 3.4]{hynd2021eradication}. As a corollary from the dynamic programming principle, we see that the value functions $\overline{u}$ and $\underline{u}$ are a viscosity sub and supersolution, respectively. Once we have the dynamic programming principle, we are able to verify naturally that $\overline{u}$ ($\underline{u}$, resp.) is a viscosity subsolution (supersolution, resp.) to \eqref{eqn:main} (see \cite{Evans2010textbook,Tran2021HJtheory}).

\begin{cor}[Theorem \ref{thm:sub/supersolution}]
The value function $\overline{u}$ ($\underline{u}$, resp.) is a viscosity subsolution (supersolution, resp.) to
$$
-\partial_t u+\beta(t)xy\partial_xu+x(\partial_xu)_++(\gamma(t)-\beta(t)x)y\partial_yu=1
$$
in $(0,\infty)\times(\mu,\infty)\times(0,\infty)$.
\end{cor}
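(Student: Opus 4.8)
The plan is to run the standard passage from the dynamic programming principle (Proposition~\ref{prop:dpp}) to the viscosity inequalities, the only genuine subtlety being the low regularity of the optimal trajectory in the supersolution step. Throughout, it suffices to test $\overline{u}$ (resp. $\underline{u}$) against $C^1$ functions, and we fix $z_0=(x_0,y_0,s_0)\in(0,\infty)\times(\mu,\infty)\times(0,\infty)$. Since $y_0>\mu$, Lemma~\ref{lem:finitespeed} gives a bound on $|\dot{I}^d|$ uniform over data $d$ whose initial state lies in a small neighborhood of $(x_0,y_0)$; hence there is $t^\ast>0$ with $I^d(r)>\mu$ for all $r\in[0,t^\ast]$ and all such $d$. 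Thus $t^\ast\le\inf_{\alpha}\inf\{t_1\ge0:I^d(t_1)=\mu\}$, so the DPP of Proposition~\ref{prop:dpp} (and its per-control form) applies on $[0,t^\ast]$; also $\overline{u}<\infty$ by Lemma~\ref{lem:limitzero}.

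\textbf{Subsolution.} Let $\varphi\in C^1$ be such that $\overline{u}-\varphi$ has a local maximum at $z_0$; subtracting a constant, assume $\overline{u}(z_0)=\varphi(z_0)$. Fix $a\in[0,1]$ and take the constant control $\alpha\equiv a$ with associated flow $(S,I)$ starting at $(x_0,y_0)$. Since $\beta,\gamma$ are continuous and $a$ is constant, the right-hand side of \eqref{eq:translated} is continuous in time, so $(S,I)\in C^1$ with $\dot{S}(0)=-\beta(s_0)x_0y_0-ax_0$ and $\dot{I}(0)=\beta(s_0)x_0y_0-\gamma(s_0)y_0$. By the inf-form of the DPP, for small $t>0$,
$$\varphi(z_0)=\overline{u}(z_0)\le t+\overline{u}(S(t),I(t),s_0+t)\le t+\varphi(S(t),I(t),s_0+t),$$
using that $(S(t),I(t),s_0+t)\to z_0$ stays where $\overline{u}-\varphi\le0$. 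Rearranging, dividing by $t$ and letting $t\to0^+$ gives $0\le 1+\partial_t\varphi(z_0)+\dot{S}(0)\partial_x\varphi(z_0)+\dot{I}(0)\partial_y\varphi(z_0)$, i.e.
$$-\partial_t\varphi(z_0)+\beta(s_0)x_0y_0\partial_x\varphi(z_0)+ax_0\partial_x\varphi(z_0)+(\gamma(s_0)-\beta(s_0)x_0)y_0\partial_y\varphi(z_0)\le1.$$
Since $x_0\ge0$ and $a\in[0,1]$ is arbitrary, taking the supremum over $a$ replaces $ax_0\partial_x\varphi(z_0)$ by $x_0(\partial_x\varphi(z_0))_+$, which is exactly the subsolution inequality for \eqref{eqn:main}.

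\textbf{Supersolution.} Let $\psi\in C^1$ with $\underline{u}-\psi$ attaining a local minimum at $z_0$ and $\underline{u}(z_0)=\psi(z_0)$. By Proposition~\ref{prop:optimalcontrol} pick $\alpha^\ast\in\mathcal{A}$ with $\underline{u}(z_0)=\underline{u}^{\alpha^\ast}(z_0)$; by Proposition~\ref{prop:dpp}, $\underline{u}(z_0)=t+\underline{u}(S^\ast(t),I^\ast(t),s_0+t)$ for small $t>0$, where $(S^\ast,I^\ast)$ is the flow of $(x_0,y_0,s_0,\alpha^\ast)$. This flow is only Lipschitz, so instead of differentiating at $t=0$ we use that $r\mapsto\psi(S^\ast(r),I^\ast(r),s_0+r)$ is absolutely continuous, giving
$$\psi(z_0)=\underline{u}(z_0)=t+\underline{u}(S^\ast(t),I^\ast(t),s_0+t)\ge t+\psi(z_0)+\int_0^t\Big[\partial_t\psi+\partial_x\psi\,\dot{S}^\ast+\partial_y\psi\,\dot{I}^\ast\Big]dr,$$
the partials evaluated along the flow. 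Dividing by $t$, all averaged terms converge by continuity of the partials and of the flow, except $\tfrac1t\int_0^t\alpha^\ast(s_0+r)\,dr$ (from the $-\alpha^\ast S^\ast$ term of $\dot{S}^\ast$), which lies in $[0,1]$; along a subsequence $t_n\to0^+$ it tends to some $\bar{a}\in[0,1]$, yielding
$$0\ge1+\partial_t\psi(z_0)+(-\beta(s_0)x_0y_0-\bar{a}x_0)\partial_x\psi(z_0)+(\beta(s_0)x_0y_0-\gamma(s_0)y_0)\partial_y\psi(z_0).$$
Rearranging and using $\bar{a}x_0\partial_x\psi(z_0)\le x_0(\partial_x\psi(z_0))_+$ gives the supersolution inequality for \eqref{eqn:main} at $z_0$.

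The only delicate point is the supersolution step: the optimal control $\alpha^\ast$ is merely $L^\infty$, so the optimal trajectory is not $C^1$ and one cannot differentiate $\psi$ along it at $t=0$; the remedy is to use the integral (absolute-continuity) form of the chain rule and to control the averaged control $\tfrac1t\int_0^t\alpha^\ast(s_0+r)\,dr$ by a subsequence, its limit being absorbed into the $x(\partial_x u)_+$ term. The subsolution step is elementary because constant controls make the trajectory $C^1$, and the remaining ingredients (upper/lower semicontinuity, the DPP, finite propagation speed, and existence of an optimal control) are all already available.
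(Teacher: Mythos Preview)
Your proof is correct and follows essentially the same route as the paper: constant controls for the subsolution test, the optimal control plus the integral (absolute-continuity) form of the chain rule for the supersolution test. The only cosmetic difference is that in the supersolution step the paper bounds $-\alpha^\ast(s+t_0)S(s)\partial_x\psi\ge -S(s)(\partial_x\psi)_+$ \emph{inside} the integral and then sends $t\to0^+$, whereas you first pass to a subsequence so that $\tfrac{1}{t}\int_0^t\alpha^\ast\to\bar a\in[0,1]$ and then bound $\bar a x_0\partial_x\psi(z_0)\le x_0(\partial_x\psi(z_0))_+$; both orderings yield the same inequality.
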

\begin{proof}
Fix the ball $B_{\delta}(x_0,y_0,t_0)\subset(0,\infty)\times(\mu,\infty)\times(0,\infty)$ with a center $(x_0,y_0,t_0)$ and a radius $\delta>0$. Let $\phi$ be a $C^1$ function defined in $B_{\delta}(x_0,y_0,t_0)$ such that $\overline{u}-\phi$ attains a maximum at $(x_0,y_0,t_0)$ in $B_{\delta}(x_0,y_0,t_0)$. Let $a\in[0,1]$ and $\alpha\equiv a\in\mathcal{A}$. Let $(S,I)$ be the flow associated with $(x_0,y_0,t_0,\alpha)\in\mathcal{D}$. As $y_0>\mu$, we have $\overline{u}(x_0,y_0,t_0)>0$. As $(S,I)$ is continuous in time, there exists $t_1>0$ depending also on $\delta>0$ such that $(S(t),I(t))\in B_{\delta}(x_0,y_0,t_0)$ for all $t\in[0,t_1]$.

From
$$
(\overline{u}-\phi)(S(t),I(t),t_0+t)\leq(\overline{u}-\phi)(x_0,y_0,t_0)
$$
for $t\in[0,\min\{\overline{u}(x_0,y_0,t_0),t_1\}]$, we obtain
\begin{align*}
-t&\leq\overline{u}(S(t),I(t),t_0+t)-\overline{u}(x_0,y_0,t_0)\\
&\leq\phi(S(t),I(t),t_0+t)-\phi(x_0,y_0,t_0).
\end{align*}
Here, we used Proposition \ref{prop:dpp} in the first line. This yields
\begin{align*}
-1&\leq\frac{d}{dt}\phi(S(t),I(t),t+t_0)|\Big|_{t=0}\\
&=\partial_t \phi(x_0,y_0,t_0)-(\beta(t_0)x_0y_0+ax_0)\partial_x\phi(x_0,y_0,t_0)-(\gamma(t_0)-\beta(t_0)x_0)y_0\partial_y\phi(x_0,y_0,t_0).
\end{align*}
Taking the supremum over $a\in[0,1]$, we obtain
$$
-\partial_t \phi(x_0,y_0,t_0)+\beta(t_0)x_0y_0\partial_x\phi(x_0,y_0,t_0)+x_0(\partial_x\phi(x_0,y_0,t_0))_++(\gamma(t_0)-\beta(t_0)x_0)y_0\partial_y\phi(x_0,y_0,t_0)\leq1.
$$

Now, let $\psi$ be a $C^1$ function defined in $B_{\delta}(x_0,y_0,t_0)$ such that $\underline{u}-\psi$ attains a minimum at $(x_0,y_0,t_0)$ in $B_{\delta}(x_0,y_0,t_0)$. Take $\alpha^{\ast}\in\mathcal{A}$ such that $\underline{u}(x_0,y_0,t_0)=\underline{u}^{\alpha^{\ast}}(x_0,y_0,t_0)$. Let $(S,I)$ be the flow associated with $(x_0,y_0,t_0,\alpha^{\ast})\in\mathcal{D}$. As $y_0>\mu$, we have $\underline{u}(x_0,y_0,t_0)>0$. As $(S,I)$ is continuous in time, there exists $t_1>0$ depending also on $\delta>0$ such that $(S(t),I(t))\in B_{\delta}(x_0,y_0,t_0)$ for all $t\in[0,t_1]$.

By Proposition \ref{prop:dpp}, we have
$$
\underline{u}(x_0,y_0,t_0)=t+\underline{u}(S(t),I(t),t+t_0)
$$
for $t\in[0,\underline{u}(x_0,y_0,t_0)]$. From
$$
(\underline{u}-\psi)(S(t),I(t),t_0+t)\geq(\underline{u}-\psi)(x_0,y_0,t_0)
$$
for $t\in[0,\min\{\underline{u}(x_0,y_0,t_0),t_1\}]$, we obtain
\begin{align*}
-t&=\underline{u}(S(t),I(t),t_0+t)-\underline{u}(x_0,y_0,t_0)\\
&\geq\psi(S(t),I(t),t_0+t)-\psi(x_0,y_0,t_0).
\end{align*}
Consequently, the fact that $\alpha^{\ast}(s)\in[0,1]$ for almost every $s\geq0$ yields that, for $t\in[0,\min\{\underline{u}(x_0,y_0,t_0),t_1\}]$,
\begin{align*}
-1&\geq\frac{1}{t}\int_0^t\frac{d}{ds}\psi(S(s),I(s),s+t_0)ds\\
&=\frac{1}{t}\int_0^t\left(\partial_t\psi(S(s),I(s),s+t_0)+(-\beta(s+t_0)S(s)I(s)-\alpha^{\ast}(s+t_0)S(s))\partial_x\psi(S(s),I(s),s+t_0)\right.\\
&\qquad\qquad\qquad\qquad\qquad\qquad\qquad\left.+(\beta(s+t_0)S(s)I(s)-\gamma(s+t_0)I(s))\partial_y\psi(S(s),I(s),s+t_0)\right)ds\\
&\geq\frac{1}{t}\int_0^t\left(\partial_t\psi(S(s),I(s),s+t_0)-\beta(s+t_0)S(s)I(s)\partial_x\psi(S(s),I(s),s+t_0)\right.\\
&\qquad\left.-S(s)(\partial_x\psi(S(s),I(s),s+t_0))_++(\beta(s+t_0)S(s)I(s)-\gamma(s+t_0)I(s))\partial_y\psi(S(s),I(s),s+t_0)\right)ds.
\end{align*}
Sending $t\to0^+$ and rearranging the terms gives
$$
-\partial_t \psi(x_0,y_0,t_0)+\beta(t_0)x_0y_0\partial_x\psi(x_0,y_0,t_0)+x_0(\partial_x\psi(x_0,y_0,t_0))_++(\gamma(t_0)-\beta(t_0)x_0)y_0\partial_y\psi(x_0,y_0,t_0)\geq1.
$$
\end{proof}

\section{A viscosity solution $u=\overline{u}=\underline{u}$ from the choice of $\mu>0$}\label{sec:solution}
This section is devoted to the proof of the following theorem.

\begin{thm}\label{thm:choiceofmu}
There exists $\mu_1\in(0,\mu_0]$ depending only on $\mu_0,\beta,\gamma$ such that for every $\mu\in(0,\mu_1]$, we have $\overline{u}=\underline{u}$ on $[0,\infty)\times[\mu_0,\infty)\times[0,\infty)$.
\end{thm}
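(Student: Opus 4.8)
Throughout, observe first that $\underline u\le\overline u$ holds unconditionally. For any $\alpha\in\mathcal A$ the set $\{s\ge0:I^d(s)\ge\mu\}$ is bounded (Lemma~\ref{lem:limitzero}), so $\overline u^\alpha(x,y,t)=\sup\{s:I^d(s)\ge\mu\}$ is finite, and by continuity $I^d(\overline u^\alpha+a)\le\mu$ for every $a\ge0$; hence $\overline u^\alpha(x,y,t)$ belongs to the set defining $\underline u^\alpha(x,y,t)$, so $\underline u^\alpha\le\overline u^\alpha$ pointwise and, taking infima, $\underline u\le\overline u$. Thus the entire content is to produce $\mu_1\in(0,\mu_0]$ with $\overline u\le\underline u$ on $[0,\infty)\times[\mu_0,\infty)\times[0,\infty)$ whenever $\mu\le\mu_1$. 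I would organize this around the \emph{first-hitting time} $\tau^\alpha(x,y,t):=\inf\{s\ge0:I^d(s)\le\mu\}$, $d=(x,y,t,\alpha)$. Since $\dot I^d=(\beta^tS^d-\gamma^t)I^d\ge-\overline\gamma I^d$ we get $I^d(s)\ge\mu_0e^{-\overline\gamma s}$, so for \emph{every} control $\tau^\alpha\ge\tfrac1{\overline\gamma}\log\tfrac{\mu_0}{\mu}=:T_\mu$; also $\tau^\alpha\le\underline u^\alpha$ trivially.

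The new ingredient — where the smallness of $\mu$ enters — is the claim that with $\mu_1:=\mu_0\big(\tfrac{\underline\beta\,\underline\gamma}{2\,\overline\beta\,\overline\gamma}\big)^{\overline\gamma}$ (which depends only on $\mu_0,\beta,\gamma$ and lies in $(0,\mu_0)$) the \emph{full-effort} control $\alpha\equiv1$ produces a flow with no re-crossing of $\mu$. Indeed, with $\alpha\equiv1$ one has $\dot S^d\le-S^d$, hence $S^d(s)\le xe^{-s}$, and $I^d$ is strictly increasing as long as $S^d>\overline\gamma/\underline\beta$. Let $\sigma\ge0$ be the time $S^d$ first reaches $\overline\gamma/\underline\beta$ (take $\sigma=0$ if $x\le\overline\gamma/\underline\beta$); then $I^d$ is increasing on $[0,\sigma]$ with $I^d(\sigma)\ge\mu_0$, $S^d(s)\le(\overline\gamma/\underline\beta)e^{-(s-\sigma)}$ for $s\ge\sigma$, and $I^d>\mu$ on $[0,\sigma+T_\mu)$, so the first crossing $\tau:=\tau^{\alpha\equiv1}(x,y,t)$ satisfies $\tau\ge\sigma+T_\mu$ and therefore
\[
S^d(\tau)\ \le\ \tfrac{\overline\gamma}{\underline\beta}\,e^{-T_\mu}\ =\ \tfrac{\overline\gamma}{\underline\beta}\Big(\tfrac{\mu}{\mu_0}\Big)^{1/\overline\gamma}\ \le\ \tfrac{\overline\gamma}{\underline\beta}\cdot\tfrac{\underline\beta\underline\gamma}{2\overline\beta\overline\gamma}\ <\ \tfrac{\underline\gamma}{\overline\beta}.
\]
Since $S^d$ is non-increasing, $S^d<\underline\gamma/\overline\beta$ on all of $[\tau,\infty)$, so $\dot I^d=(\beta^tS^d-\gamma^t)I^d<0$ there; thus $I^d$ is strictly decreasing after $\tau$, and with $I^d>\mu$ on $[0,\tau)$ this gives $\{s:I^d(s)\ge\mu\}=[0,\tau]$, whence $\overline u^{\alpha\equiv1}(x,y,t)=\tau=\underline u^{\alpha\equiv1}(x,y,t)$.

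The second step is to pass from $\alpha\equiv1$ to arbitrary controls via the minimum-time problem: I would use that $\alpha\equiv1$ is \emph{time-optimal} for driving $I$ to the threshold, i.e.\ $\tau^{\alpha\equiv1}(x,y,t)=\inf_{\alpha\in\mathcal A}\tau^\alpha(x,y,t)$. Granting this, the theorem follows by a one-line chain valid for $\mu\le\mu_1$ and $y\ge\mu_0$:
\[
\underline u\ \le\ \overline u\ =\ \inf_\alpha\overline u^\alpha\ \le\ \overline u^{\alpha\equiv1}\ =\ \tau^{\alpha\equiv1}\ =\ \inf_\alpha\tau^\alpha\ \le\ \inf_\alpha\underline u^\alpha\ =\ \underline u,
\]
so every term equals $\underline u$ and $\overline u=\underline u$, proving Theorem~\ref{thm:choiceofmu} with this $\mu_1$.

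The hard part is the time-optimality of $\alpha\equiv1$ used in the last step. The intuition is that vaccination can only help push $I$ down, so the maximal rate is best; to make this precise I would show that the minimum-time value function $V(x,y,t):=\inf_\alpha\tau^\alpha(x,y,t)$ is non-decreasing in the susceptible coordinate $x$, from which the control-dependent part of the associated Hamiltonian — proportional to $-a\,x\,\partial_xV$ with $a\in[0,1]$ — is minimised at $a=1$, forcing $\alpha\equiv1$ to be optimal (the bang–bang/Pontryagin picture alluded to in the introduction). The monotonicity of $V$ in $x$ should be extracted from a comparison argument for the coupled system~\eqref{eq:translated} restricted to the time interval on which $I\ge\mu$; carrying this out, together with the selection of optimal controls via Proposition~\ref{prop:optimalcontrol} and the dynamic programming principle (Proposition~\ref{prop:dpp}), is where the technical weight of this section lies. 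Everything else — including the reduction to $x\le\overline\gamma/\underline\beta$ implicit in the ``$\sigma$'' bookkeeping above — is routine.
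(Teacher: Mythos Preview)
Your first two steps---the unconditional inequality $\underline u\le\overline u$ and the analysis of the full-effort control $\alpha\equiv1$ yielding an explicit $\mu_1$ with no re-crossing---are correct and clean. The gap is entirely in the last step: the claim that $\alpha\equiv1$ is time-optimal for the first-hitting time, equivalently that $V(x,y,t)=\inf_\alpha\tau^\alpha$ is non-decreasing in $x$, is \emph{false}. This is precisely the content of the Bolzoni et~al.\ result cited in the introduction: for the constant-rate SIR model (a special case of the present setting, in which moreover $\tau^\alpha=\underline u^\alpha=\overline u^\alpha$), the time-optimal vaccination strategy is a \emph{delayed} bang--bang control---do nothing on $[0,t^*)$, then full effort on $[t^*,\infty)$---and the switching time $t^*$ is in general strictly positive. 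Thus $\tau^{\alpha\equiv1}>\inf_\alpha\tau^\alpha$ for some initial data, and your chain of inequalities breaks. Correspondingly $V$ cannot be monotone in $x$; this is exactly why the Hamilton--Jacobi equation~\eqref{eqn:main} retains the positive part $(\partial_xu)_+$ rather than simplifying to $\partial_xu$. The comparison argument you sketch cannot close because the coupling in~\eqref{eq:translated} allows the $S$-ordering, and subsequently the $I$-ordering, to reverse before $I$ reaches $\mu$.

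The paper's route avoids optimal controls altogether. The key observation is that $\underline u^\alpha<\overline u^\alpha$ forces $\dot I=0$ at $s=\overline u^\alpha$, so it suffices to show that for \emph{every} datum with $y_0\ge\mu_0$ all critical values of $I$ lie above some $\mu_1>0$. Since $\dot I(s)=0$ requires $S(s)\in[\underline\gamma/\overline\beta,\overline\gamma/\underline\beta]$, one reduces (after routine normalisations) to bounding the exit time $T(x_0)=\inf\{t\ge0:S(t)\le\underline\gamma/\overline\beta\}$ for $x_0$ in the compact interval $[\underline\gamma/\overline\beta,\overline\gamma/\underline\beta]$ and $y_0=\mu_0$; continuity of $T$ in $x_0$ (an inverse-function-type argument using $\dot S\le-\underline\beta m_Sm_I<0$) then gives a finite bound $M$, whence $I(s)\ge\mu_0 e^{-\overline\gamma M}$ at every critical time. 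This compactness argument produces no explicit constant, but it works uniformly over all controls, which is what your approach cannot deliver.
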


The theorem means that we can avoid the splitting of the two eradication times by choosing $\mu\in(0,\mu_0]$ small enough that we obtain a viscosity solution $u:=\overline{u}=\underline{u}$ to \eqref{eqn:main} in $(0,\infty)\times(\mu_0,\infty)\times(0,\infty)$.

\begin{lem}
Let $(S,I)$ be the flow of \eqref{eq:translated} associated with a datum $d=(x_0,y_0,t_0,\alpha)\in\mathcal{D}$. If $\underline{u}^{\alpha}(x_0,y_0,t_0)<\overline{u}^{\alpha}(x_0,y_0,t_0)$, then $\dot{I}(s)=0$ for $s=\overline{u}^{\alpha}(x_0,y_0,t_0)$.
\end{lem}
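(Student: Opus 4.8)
The plan is to set $\overline{s}:=\overline{u}^{\alpha}(x_0,y_0,t_0)$ and $\underline{s}:=\underline{u}^{\alpha}(x_0,y_0,t_0)$ and to argue by contradiction on the sign of $\dot I(\overline{s})$; note first that $\dot I=(\beta^{t_0}S-\gamma^{t_0})I$ is continuous by Lemma~\ref{lem:finitespeed}, so $I\in C^{1}([0,\infty))$ and $\dot I(\overline{s})$ is a genuine derivative. Before splitting into cases I would record three elementary observations. (i) By Lemma~\ref{lem:limitzero}, $I(t)\to0$ as $t\to\infty$, so $\{s\ge0:I(s)\ge\mu\}$ is bounded; it is nonempty since $I(0)=y_0\ge\mu$, and closed by continuity of $I$. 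Hence $\overline{s}<\infty$ and $I(\overline{s})\ge\mu$, while $I(s)<\mu$ for all $s>\overline{s}$ forces, by continuity, $I(\overline{s})=\mu$. (ii) One always has $\underline{s}\le\overline{s}$: since $I(\overline{s})=\mu$ and $I(s)<\mu$ for $s>\overline{s}$, we have $I(\overline{s}+a)\le\mu$ for every $a\ge0$, so $\overline{s}$ lies in the set defining $\underline{s}$. (iii) The hypothesis $\underline{s}<\overline{s}$ forces $\overline{s}>0$: if $\overline{s}=0$ then $I(a)\le\mu$ for all $a\ge0$, whence $\underline{s}=0=\overline{s}$.

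Next I would rule out $\dot I(\overline{s})>0$. In that case $I$ is strictly increasing at $\overline{s}$, so $I(s)>I(\overline{s})=\mu$ on some interval $(\overline{s},\overline{s}+\varepsilon)$, contradicting $\overline{s}=\sup\{s\ge0:I(s)\ge\mu\}$. Hence $\dot I(\overline{s})\le0$.

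It remains to rule out $\dot I(\overline{s})<0$, and this is the only step that uses the hypothesis. If $\dot I(\overline{s})<0$, then $I$ is strictly decreasing at $\overline{s}$, so there is $\varepsilon>0$ with $I(s)>\mu$ for all $s\in(\overline{s}-\varepsilon,\overline{s})$. Now for any $s\in[0,\overline{s})$ the interval $(\max\{s,\overline{s}-\varepsilon\},\overline{s})$ is nonempty and contained in $(\overline{s}-\varepsilon,\overline{s})$, so choosing $s_{*}$ in it gives $s_{*}>s$ and $I(s_{*})>\mu$; writing $s_{*}=s+a$ with $a>0$ shows $s$ does not satisfy ``$I(s+a)\le\mu$ for all $a\ge0$''. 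Therefore $\underline{s}\ge\overline{s}$, and with (ii) this forces $\underline{s}=\overline{s}$, contradicting the hypothesis. Hence $\dot I(\overline{s})\ge0$ as well, and combining with the previous paragraph yields $\dot I(\overline{s})=0$.

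I do not expect a genuine obstacle here: the whole argument is an unwinding of the definitions of $\overline{u}^{\alpha}$ and $\underline{u}^{\alpha}$ against the continuity (indeed $C^{1}$ regularity) of $I$. The points that must be handled carefully are the identity $I(\overline{s})=\mu$ and the unconditional inequality $\underline{s}\le\overline{s}$, since it is precisely the strict gap $\underline{s}<\overline{s}$ that a nonzero $\dot I(\overline{s})$ would collapse; one should also verify $\overline{s}>0$ so that ``$\dot I(\overline{s})=0$'' is a statement about an honest two-sided derivative.
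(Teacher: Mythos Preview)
Your proof is correct and follows essentially the same idea as the paper: both arguments amount to recognizing that $I(\overline{s})=\mu$ while $I\le\mu$ on a two-sided neighborhood of $\overline{s}$ (the right side from the definition of $\overline{s}$, the left side from $\underline{s}<\overline{s}$), so $\overline{s}$ is a local maximum of $I$ and $\dot I(\overline{s})=0$. The paper states this directly in two lines, whereas you unpack it via a sign-of-derivative contradiction and add helpful preliminary checks (finiteness of $\overline{s}$, the identity $I(\overline{s})=\mu$, and $\overline{s}>0$), but the content is the same.
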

\begin{proof}
The proof of this lemma is straightforward; if $\underline{u}^{\alpha}(x_0,y_0,t_0)<\overline{u}^{\alpha}(x_0,y_0,t_0)$, then there exists $\delta\in(0,\overline{u}^{\alpha}(x_0,y_0,t_0)-\underline{u}^{\alpha}(x_0,y_0,t_0))$ such that $I(s)\leq\mu$ for all $s\in[\overline{u}^{\alpha}(x_0,y_0,t_0)-\delta,\overline{u}^{\alpha}(x_0,y_0,t_0)+\delta]$. Since $I(s)=\mu$ for $s=\overline{u}^{\alpha}(x_0,y_0,t_0)$, we get $\dot{I}(s)=0$ for $\overline{u}^{\alpha}(x_0,y_0,t_0)$.
\end{proof}

Therefore, once we prove the following proposition, then we obtain Theorem \ref{thm:choiceofmu}.

\begin{prop}\label{prop:mu}
For any $\mu_0>0$, there is $\mu_1\in(0,\mu_0]$ depending only on $\mu_0,\beta,\gamma$ such that for every $d=(x_0,y_0,t_0,\alpha)\in\mathcal{D}$ with $y\geq\mu_0$, it holds that $\{I(s): \dot{I}(s)=0,\ s\geq0 \}\subset(\mu_1,\infty),$ where $(S,I)$ is the flow of \eqref{eq:translated} associated with $d$.
\end{prop}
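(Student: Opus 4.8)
The plan is to reduce everything to the behaviour of the flow inside a compact "turning band'' for $S$, and then, for the delicate regime, to argue by contradiction using a compactness argument that exploits the continuity of $\beta,\gamma$. Throughout write $h_{\min}:=\underline\gamma/\overline\beta$ and $h_{\max}:=\overline\gamma/\underline\beta$.

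First I would localize the stationary points. Since $I>0$ along any flow, $\dot I(s)=0$ is equivalent to $\beta(s+t_0)S(s)=\gamma(s+t_0)$, which forces $S(s)\in[h_{\min},h_{\max}]$. As $\dot S=-S(\beta^{t_0}I+\alpha^{t_0})\le0$, the function $S$ is non-increasing; hence, setting $\sigma_1:=\inf\{s\ge0:S(s)\le h_{\max}\}$, we get $\dot I>0$ on $[0,\sigma_1)$, so $I(\sigma_1)\ge I(0)=y_0\ge\mu_0$, and every stationary time $s_*$ satisfies $s_*\ge\sigma_1$ with $S(s)\in[h_{\min},h_{\max}]$ for all $s\in[\sigma_1,s_*]$. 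On that interval $\frac{d}{ds}\log S=-\beta^{t_0}I-\alpha^{t_0}\le-\underline\beta I$ together with $S(\sigma_1)\le h_{\max}$, $S(s_*)\ge h_{\min}$ yields the scale-free bound
\[
\int_{\sigma_1}^{s_*}I(s)\,ds\ \le\ \tfrac1{\underline\beta}\log\tfrac{h_{\max}}{h_{\min}}=:E,
\]
while $|\dot I|/I=\beta^{t_0}\,|S-\gamma^{t_0}/\beta^{t_0}|\le\overline\beta(h_{\max}-h_{\min})$ on $[\sigma_1,s_*]$ gives $I(s)\ge\mu_0\,e^{-\overline\beta(h_{\max}-h_{\min})(s-\sigma_1)}$ there. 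Feeding the exponential lower bound into the integral bound constrains $s_*-\sigma_1$, hence bounds $I(s_*)$ away from $0$, whenever $\mu_0$ is large relative to $\beta,\gamma$; this settles the proposition in that regime.

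For general (small) $\mu_0$ the previous estimate is inconclusive — after $S$ enters the band, $I$ can drop by a bounded \emph{additive} amount and a stationary point may occur arbitrarily late — so I would argue by contradiction. If the statement failed for some $\mu_0$, there would be $d_n=(x_n,y_n,t_n,\alpha_n)\in\mathcal D$ with $y_n\ge\mu_0$ and stationary times $s_n$ with $I_n(s_n)\to0$. Translating each flow to the last time $\rho_n\in[0,s_n]$ at which $I_n\ge\mu_0$ (well defined by the first step), the restarted datum $\tilde d_n=(S_n(\rho_n),\mu_0,t_n+\rho_n,\alpha_n)$ has $\tilde x_n:=S_n(\rho_n)\in[h_{\min},h_{\max}]$ — it cannot exceed $h_{\max}$, since otherwise $\dot{\tilde I}_n(0)=(\beta(t_n+\rho_n)\tilde x_n-\gamma(t_n+\rho_n))\mu_0>0$, contradicting $\tilde I_n<\mu_0$ on $(0,\tilde s_n]$ with $\tilde s_n:=s_n-\rho_n$ — and its flow satisfies $\tilde I_n(0)=\mu_0$, $\tilde S_n\in[h_{\min},h_{\max}]$ on $[0,\tilde s_n]$, $\int_0^{\tilde s_n}\tilde I_n\le E$, and $\tilde s_n\to\infty$ (by the exponential bound). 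Since these flows stay in a fixed compact set they are equi-Lipschitz on compacts; after passing to subsequences so that $\tilde x_n\to\tilde x_\infty\in[h_{\min},h_{\max}]$, $\alpha_n\to\alpha_\infty$ weak$^{\ast}$ in $L^\infty$, and $\beta(\cdot+t_n+\rho_n)\to\beta_\infty$, $\gamma(\cdot+t_n+\rho_n)\to\gamma_\infty$ weak$^{\ast}$ (values in $[\underline\beta,\overline\beta]$ resp. $[\underline\gamma,\overline\gamma]$), Arzelà–Ascoli and a diagonal argument give $(\tilde S_n,\tilde I_n)\to(S_\infty,I_\infty)$ locally uniformly, where $(S_\infty,I_\infty)$ solves the analogous ODE with coefficients $(\beta_\infty,\gamma_\infty,\alpha_\infty)$ from $(\tilde x_\infty,\mu_0)$. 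The limit inherits $I_\infty\le\mu_0$ with $I_\infty(0)=\mu_0$, $S_\infty$ non-increasing and valued in $[h_{\min},h_{\max}]$, and $\int_0^\infty(\beta_\infty I_\infty+\alpha_\infty)\le\log(h_{\max}/h_{\min})<\infty$; hence $S_\infty\to c\in[h_{\min},h_{\max}]$, and $I_\infty$ being Lipschitz with $\int_0^\infty I_\infty<\infty$ forces $I_\infty(s)\to0$. Using that $s_n$ is a stationary time — i.e. that $\tilde S_n(\tilde s_n)$ coincides with $\gamma(\cdot)/\beta(\cdot)$ there — to control $S_\infty$ near $+\infty$, one then contradicts $I_\infty$ attaining its maximum $\mu_0$ at $s=0$.

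The main obstacle is precisely this last part: because a stationary point can occur arbitrarily late, no estimate depending only on $\mu_0,\underline\beta,\overline\beta,\underline\gamma,\overline\gamma$ can suffice, and one must genuinely use the continuity of $\beta,\gamma$ through compactness of their time–shifts together with the weak$^{\ast}$ compactness of $\mathcal A$. Justifying the passage to the limit with only weak$^{\ast}$ convergent coefficients, and identifying the exact contradiction at $+\infty$, is where the real work lies; the first two steps are routine once the turning-band picture is set up.
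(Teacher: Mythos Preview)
Your reduction to the turning band $S\in[h_{\min},h_{\max}]$ and the integral bound $\int I\le E$ match the paper's opening moves, and your explicit treatment of the regime $\mu_0>\overline\beta(h_{\max}-h_{\min})E$ is a pleasant bonus the paper does not isolate. The divergence is in the small-$\mu_0$ regime, and there your contradiction argument has a genuine gap.

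The paper never passes to a limit flow. After reducing to $x_0\in[h_{\min},h_{\max}]$, $y_0=\mu_0$, it introduces the exit time
\[
T(x_0)=\inf\{s\ge0:S(s)\le h_{\min}\}
\]
and observes that every stationary time occurs before $T(x_0)$, so $I$ there is at least $\mu_0 e^{-\overline\gamma\,T(x_0)}$. It then proves that $T$ is continuous on the compact data, essentially by the inverse function theorem applied to the strictly decreasing map $s\mapsto S(s)$ (using $\dot S\le -\underline\beta\, m_S m_I<0$ on compact time intervals), and sets $\mu_1=\tfrac12\mu_0 e^{-\overline\gamma M}$ with $M=\sup T$.

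Your route instead extracts a limit flow $(S_\infty,I_\infty)$ driven by weak$^{\ast}$-limit coefficients $(\beta_\infty,\gamma_\infty,\alpha_\infty)$ and seeks a contradiction from $I_\infty(0)=\mu_0$, $I_\infty\le\mu_0$, and $S_\infty\in[h_{\min},h_{\max}]$ for all $s\ge0$. But this configuration is \emph{not} contradictory at that level of generality. Take $\alpha_\infty\equiv0$, $\beta_\infty\equiv\underline\beta$, $\gamma_\infty\equiv\overline\gamma$, $S_\infty(0)=h_{\max}=\overline\gamma/\underline\beta$, and $\mu_0$ small: this is constant-coefficient SIR at the herd-immunity threshold, so $I_\infty$ has its maximum $\mu_0$ at $s=0$ and decreases to $0$, while the first integral $S+I-h_{\max}\log S$ shows $S_\infty(\infty)\approx h_{\max}-\sqrt{2h_{\max}\mu_0}>h_{\min}$ for small $\mu_0$. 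Since the weak$^{\ast}$ closure of $\{\beta(\cdot+\tau):\tau\ge0\}$ for a merely continuous bounded $\beta$ can certainly contain constants, nothing in your setup excludes this limit. The sentence ``one then contradicts $I_\infty$ attaining its maximum $\mu_0$ at $s=0$'' is exactly where the argument fails: a flow whose $I$-component peaks at time zero is entirely consistent. You correctly flag that ``identifying the exact contradiction\ldots is where the real work lies''---but in fact there is no contradiction to identify once you have passed to weak$^{\ast}$ limits of the coefficients, which is why the paper avoids doing so and bounds the exit time directly.
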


The rest of this section is devoted to the proof of this proposition.

\begin{proof}
Fix $d=(x_0,y_0,t_0,\alpha)\in\mathcal{D}$ with $y\geq\mu_0$. We divide the proof into three steps.

\medskip

\textbf{Step 1}: Case reductions.

\medskip


First of all, for the case $t_0>0$, proving the conclusion for $d=(x_0,y_0,t_0,\alpha)$ is the same as proving for $(x_0,y_0,0,\alpha^{t_0})$ with the changed coefficients $\beta^{t_0},\gamma^{t_0}$. Thus, it suffices to show the case when $t_0=0$.

When $x_0\leq\underline{\gamma}/\overline{\beta}$, we can take $\mu_1=\mu_0$, as $I$ is strictly decreasing when this is the case, and from now on, we may assume $x_0\geq\underline{\gamma}/\overline{\beta},t_0=0$ without loss of generality. 

If $x_0>\overline{\gamma}/\underline{\beta}$, then we can find the minimal time $t_1>0$ such that $S(t_1)=\overline{\gamma}/\underline{\beta}$. The derivative vanishing $\dot{I}(t)=0$ happens only for $t\geq t_1$, and therefore, it suffices to prove the proposition for $(S(t_1),I(t_1),0,\alpha^{t_1})$ with the changed coefficients $\beta^{t_1},\gamma^{t_1}$, which belongs to the case when $x_0\in\left[\underline{\gamma}/\overline{\beta},\overline{\gamma}/\underline{\beta}\right],t_0=0$. From now on, we may assume $x_0\in\left[\underline{\gamma}/\overline{\beta},\overline{\gamma}/\underline{\beta}\right],t_0=0$ without loss of generality.

If $y_0>\mu_0$, then we can find the minimal time $t_2>0$ such that $I(t_2)=\mu_0$ since $\lim_{t\to\infty}I(t)=0$. As $\dot{I}(t_2)\leq0$, we necessarily have $\beta(t_2)S(t_2)-\gamma(t_2)\leq0$, which implies $S(t_2)\leq\overline{\gamma}/\underline{\beta}$. It may happen that $\dot{I}(t)=0$ for some $t\in[0,t_2)$, but $I(t)>\mu_0$ for such $t$. Thus, as long as we require $\mu_1\in(0,\mu_0]$, the case $y_0>\mu_0$ is reduced to the case for $(S(t_2),\mu_0,0,\alpha^{t_2})$ (with the changed coefficients $\beta^{t_2},\gamma^{t_2}$). If $S(t_2)\leq\underline{\gamma}/\overline{\beta}$, we can take $\mu_1=\mu_0$. If not, then $S(t_2)\in\left[\underline{\gamma}/\overline{\beta},\overline{\gamma}/\underline{\beta}\right]$, which belongs to the case $x_0\in\left[\underline{\gamma}/\overline{\beta},\overline{\gamma}/\underline{\beta}\right],y_0=\mu_0,t_0=0$.

From now on, we may assume that $x_0\in\left[\underline{\gamma}/\overline{\beta},\overline{\gamma}/\underline{\beta}\right],y_0=\mu_0,t_0=0$ without loss of generality. We also now fix $\alpha\in\mathcal{A}$.

\medskip

\textbf{Step 2}: Definition of a mapping $T$.

\medskip

Let $X:=\left[\underline{\gamma}/\overline{\beta},\overline{\gamma}/\underline{\beta}\right]$. Define the mapping $T$ as follows:
\begin{align*}
T\ :\ X\ &\longrightarrow\ [0,\infty)\\
x_0\ &\longmapsto\ T(x_0) 
\end{align*}
where $T(x_0):=\inf\left\{t\geq0:S(t)\leq\underline{\gamma}/\overline{\beta}\right\}$, $(S,I)$ is the flow associated with $(x_0,\mu_0,0,\alpha)\in\mathcal{D}$. By the definition of $T$, we have $S(T)=\underline{\gamma}/\overline{\beta}$.

If we can prove that $T$ is bounded, say by $M>0$, then we are done the proof of the proposition. This is because for any $x_0\in X$, it holds that
\begin{align*}
\dot{I}=\beta SI- \gamma I\geq -\overline{\gamma}I\ \implies\ I(t)\geq\mu_0 e^{-\overline{\gamma} M} \textrm{ for any }t\in[0,M].
\end{align*}
The derivative vanishing $\dot{I}(t)=0$ happens only if $S(t)\in\left[\underline{\gamma}/\overline{\beta},\overline{\gamma}/\underline{\beta}\right]$, and this occurs only when $t\leq M$. Consequently, taking $\mu_1=\frac{1}{2}\mu_0 e^{-\overline{\gamma} M}$ completes the proof.

Now, since $X$ is compact, it suffices to show the continuity of $T$.

\medskip

\textbf{Step 3}: Continuity of the mapping $T$.

\medskip

In this step, we prove that if $x_k\rightarrow x$ as $k\to\infty$ in $X$, then $T_k:=T(x_k)\rightarrow T_0:=T(x)$ as $k\to\infty$.

Let $(S_k,I_k)$ be the flow associated with $(x_k,\mu_0,0,\alpha)$ for each $k=1,2,\cdots$, and let $(S_0,I_0)$ be the flow associated with $(x,\mu_0,0,\alpha)$. Then, by Proposition \ref{prop:stability}, $(S_k,I_k)$ converges to $(S_0,I_0)$ locally uniformly in $[0,\infty)$ as $k\to\infty$.

We first check that $T_k\leq T_0+1$ for all but finitely many $k$. If not, then there would be a subsequence $\left\{T_{k_j}\right\}_{j}$ such that
$$
\underline{\gamma}/\overline{\beta}\leq S_{k_j}(T_{k_j})\leq S_{k_j}(T_0+1).
$$
Letting $j\to\infty$ gives
$$
\underline{\gamma}/\overline{\beta}\leq S_0(T_0+1)< S_0(T_0)=\underline{\gamma}/\overline{\beta},
$$
which is a contradiction.

Let
\begin{align*}
m_S&:=\inf_{k\geq1,t\in[0,T_0+1]}\left\{S_k(t)\right\},\\
m_I&:=\inf_{k\geq1,t\in[0,T_0+1]}\left\{I_k(t)\right\}.
\end{align*}
Since $(S_k,I_k)$ converges to $(S_0,I_0)$ uniformly in $[0,T_0+1]$ as $k\to\infty$, we have $m_S,m_I>0$.

Now, for $k\geq1$,
$$
\dot{S}_k=-\beta S_kI_k-\alpha_kS_k\leq-\underline{\beta}m_Sm_I<0,
$$
on $[0,T_0+1]$. This implies, by the mean value theorem, that for $k\geq1$ large enough,
$$
\left|\frac{S_k(T_k)-S_k(T_0)}{T_k-T_0}\right|\geq\underline{\beta}m_Sm_I>0,
$$
unless $T_k=T_0$. Therefore,
$$
\left|T_k-T_0\right|\leq\frac{1}{\underline{\beta}m_Sm_I}\left|S_k(T_k)-S_k(T_0)\right|=\frac{1}{\underline{\beta}m_Sm_I}\left|\underline{\gamma}/\overline{\beta}-S_k(T_0)\right|.
$$
As $S_k(T_0)\to S_0(T_0)=\underline{\gamma}/\overline{\beta}$, we see that $T_k\to T_0$ as $k\to\infty$.

Therefore, $T$ is continuous, and this finishes the proof.
\end{proof}

\begin{rem}
The argument is essentially the proof of the inverse function theorem, as $T$ is the inverse function of $S$. The continuity of $S$ yields that of $T$. We necessarily separate the derivative $\dot{S}$ from 0, which corresponds to the nonzero determinant assumption of the inverse function theorem.
\end{rem}

\section{Further properties of $u=\overline{u}=\underline{u}$}\label{sec:furtherproperties}
\subsection{Uniqueness}
From now on, we assume that $\mu_1\in(0,\mu_0]$ is chosen as in Theorem \ref{thm:choiceofmu} and let $\mu\in(0,\mu_1]$ so that we have a viscosity solution $\overline{u}=\underline{u}$ to \eqref{eqn:main} in $(0,\infty)\times(\mu_0,\infty)\times(0,\infty)$. In this section, we discuss the uniqueness of the solution under prescribed boundary values with a boundedness assumption.

Let us denote $\Omega:=(0,\infty)\times (\mu_0, \infty) \subset \R^2$. Let $u(x,y,t) \in C(\overline {\Omega}\times [0,\infty))$ be a viscosity solution, bounded from below, to
\begin{equation}\label{eq:hj}
\begin{cases}
-\partial_t u+ \beta(t) x y \partial_x u+ x(\partial_x u)_++(\gamma(t) -\beta(t) x ) y\partial_y u=1 \quad&\text{in}\quad \Omega \times (0,\infty),\\
u(x,\mu_0,t)=f(x,t) \quad &\text{on} \quad \left[0,\overline{\gamma}/\underline{\beta}\right]\times [0,\infty),\\
u(0,y,t)=g(y,t) \quad &\text{on}\quad [\mu_0,\infty)\times[0,\infty),\\
u(x,y,0)=h(x,y)\quad&\text{on}\quad \left[0,\overline{\gamma}/\underline{\beta}\right]\times [\mu_0,\infty),
\end{cases}
\end{equation}
where $f,g,h\geq0$ are given continuous functions bounded from below on $\left[0,\overline{\gamma}/\underline{\beta}\right]\times [0,\infty),[\mu_0,\infty)\times[0,\infty),\left[0,\overline{\gamma}/\underline{\beta}\right]\times [\mu_0,\infty)$, respectively.
Then, one sees that $v:=e^{-u}\in C(\overline{\Omega}\times [0,\infty))$ is a bounded viscosity solution to
\begin{equation}\label{eq:hjb}
\begin{cases}
-\partial_t  v+v+ \beta(t) x y \partial_x v - x(\partial_x  v)_- +(\gamma(t) -\beta(t) x ) y\partial_y  v=0 \quad&\text{in}\quad \Omega\times (0,\infty),\\
 v(x,\mu_0,t)=e^{-f(x,t)} \quad &\text{on} \quad \left[0,\overline{\gamma}/\underline{\beta}\right]\times [0,\infty),\\
 v(0,y,t)=e^{-g(y,t)} \quad &\text{on}\quad [\mu_0,\infty)\times[0,\infty),\\
 v(x,y,0)=e^{-h(x,y)} \quad&\text{on}\quad \left[0,\overline{\gamma}/\underline{\beta}\right]\times [\mu_0,\infty).
\end{cases}
\end{equation}

\begin{prop}\label{prop:uniqueness}
There is at most one bounded viscosity solution to~\eqref{eq:hjb}.
\end{prop}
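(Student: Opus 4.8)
The plan is to identify every bounded viscosity solution of \eqref{eq:hjb} with one and the same function --- the value of an auxiliary control problem attached to the trajectories of \eqref{eq:translated} --- so that uniqueness is immediate. The starting observation is that $-x(\partial_x v)_-=\min_{a\in[0,1]}\big(a x\,\partial_x v\big)$ for $x\ge0$, so \eqref{eq:hjb} is the convex Bellman equation $\inf_{a\in[0,1]}F^{a}[v]=0$ in $Q:=\Omega\times(0,\infty)$, where $F^{a}[v]:=-\partial_t v+(\beta(t)xy+ax)\partial_x v+(\gamma(t)-\beta(t)x)y\,\partial_y v+v$. For a fixed control value $a$, the velocity field $(-(\beta(t)xy+ax),\,\beta(t)xy-\gamma(t)y)$ is precisely that of \eqref{eq:translated}, and the chain rule gives, along any flow $(S^{\alpha},I^{\alpha})$ issued from a datum $(x_0,y_0,t_0,\alpha)\in\mathcal{D}$,
$$
\frac{d}{dr}\Big[e^{-r}\,v\big(S^{\alpha}(r),I^{\alpha}(r),t_0+r\big)\Big]=-\,e^{-r}\,F^{\alpha(t_0+r)}[v]\big(S^{\alpha}(r),I^{\alpha}(r),t_0+r\big),
$$
which, in its viscosity form, is the workhorse of the argument.

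I next record the exit structure. Write $\Sigma$ for the ``data set'': the union of $\{y=\mu_0,\ 0\le x\le\overline{\gamma}/\underline{\beta}\}$, $\{x=0\}$ and $\{t=0,\ 0\le x\le\overline{\gamma}/\underline{\beta}\}$, with the obvious ranges of the remaining variables. For every $(x_0,y_0,t_0)\in\overline{Q}\setminus\Sigma$ and every $\alpha\in\mathcal{A}$, set $\tau^{\alpha}:=\inf\{r>0:I^{\alpha}(r)=\mu_0\}$. Using Lemma \ref{lem:limitzero} together with the sign bookkeeping already carried out in the proof of Proposition \ref{prop:mu}, I claim that $0<\tau^{\alpha}<\infty$, that the lifted flow $(S^{\alpha}(r),I^{\alpha}(r),t_0+r)$ stays in $\overline{Q}$ for $r\in[0,\tau^{\alpha}]$ and in $Q$ for $r\in(0,\tau^{\alpha})$, and that at the exit $S^{\alpha}(\tau^{\alpha})\in(0,\overline{\gamma}/\underline{\beta}]$, because $\dot I^{\alpha}(\tau^{\alpha})\le0$ forces $\beta S\le\gamma$ there. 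Thus every forward trajectory leaves $Q$ only through the part of $\{y=\mu_0\}$ on which the boundary value $e^{-f}$ is prescribed. I then set
$$
\widehat V(x_0,y_0,t_0):=\sup_{\alpha\in\mathcal{A}}\,e^{-\tau^{\alpha}}\,e^{-f\left(S^{\alpha}(\tau^{\alpha}),\,t_0+\tau^{\alpha}\right)}\in(0,1],
$$
extended by $e^{-f}$, $e^{-g}$, $e^{-h}$ on the three pieces of $\Sigma$; this function refers to no particular solution.

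Now let $v$ be any bounded viscosity solution of \eqref{eq:hjb}. Continuity of $v$ up to the boundary makes $v=\widehat V$ trivial on $\Sigma$, so fix $(x_0,y_0,t_0)\notin\Sigma$. As a viscosity supersolution of the convex Bellman equation $\inf_a F^{a}[v]=0$, $v$ obeys the supersolution half of the dynamic programming principle: for each $\alpha\in\mathcal{A}$ the map $r\mapsto e^{-r}v(S^{\alpha}(r),I^{\alpha}(r),t_0+r)$ is nonincreasing on $[0,\tau^{\alpha}]$. Comparing its values at $r=0$ and $r=\tau^{\alpha}$ and inserting the prescribed boundary value $e^{-f}$ at the exit (which lies on $\Sigma$ by the previous step) gives $v(x_0,y_0,t_0)\ge e^{-\tau^{\alpha}}e^{-f(S^{\alpha}(\tau^{\alpha}),\,t_0+\tau^{\alpha})}$, hence $v\ge\widehat V$ after taking the supremum over $\alpha$. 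For the reverse inequality I use the subsolution property, which yields the sub-optimality half: for each $\epsilon>0$ there is $\alpha_{\epsilon}\in\mathcal{A}$ along whose flow $r\mapsto e^{-r}v(\cdot)$ never drops below $v(x_0,y_0,t_0)-\epsilon$ up to the exit time, so that $v(x_0,y_0,t_0)\le e^{-\tau^{\alpha_{\epsilon}}}e^{-f(\cdots)}+\epsilon\le\widehat V(x_0,y_0,t_0)+\epsilon$; letting $\epsilon\to0$ gives $v\le\widehat V$. Therefore $v=\widehat V$ for every bounded solution $v$, and Proposition \ref{prop:uniqueness} follows.

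The step I expect to be the real work is the sub-optimality inequality for subsolutions, executed with general measurable controls and up to the control-dependent, a priori unbounded exit time $\tau^{\alpha}$: it requires a measurable selection of a nearly optimal control value at each instant, a localization keeping the relevant arc of the trajectory inside $\overline{Q}$, and the passage from the infinitesimal inequality to the integrated one --- each ingredient classical, but their assembly here needs care; in particular one must check that the approximating trajectory still exits $Q$, which it does since every trajectory does by Lemma \ref{lem:limitzero}. The genuinely degenerate feature of the problem --- that no boundary condition is imposed on $\{y=\mu_0,\ x>\overline{\gamma}/\underline{\beta}\}$ nor on $\{t=0,\ x>\overline{\gamma}/\underline{\beta}\}$ --- costs nothing in this approach, precisely because no forward trajectory ever leaves $Q$ through those faces. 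The alternative route, a direct comparison principle by doubling of variables, would instead need a barrier argument (exploiting that the $y$-drift points strictly into $\Omega$ on that face) to keep the doubled supremum off the data-free part of $\partial Q$, together with the usual $\sigma/(T-t)$ cutoff for $t\to\infty$ and a slowly growing penalty controlling $x,y\to\infty$.
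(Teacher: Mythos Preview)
Your proposal is correct but follows a genuinely different route from the paper. You identify every bounded solution with the value $\widehat V$ of an exit-time control problem, via the two halves of the dynamic programming inequality together with the exit structure of the SIR flow. The paper instead proves a direct comparison principle: it introduces the barrier
\[
w(x,y,t)=\tfrac12\mu_0\, t+x+y+\frac{g(x)}{y-\mu_0},
\]
with $g\ge0$ nondecreasing, vanishing on $[0,\overline\gamma/\underline\beta]$ and positive beyond; this $w$ is a strict supersolution of the homogeneous operator and is declared $+\infty$ on the data-free face $\{y=\mu_0,\ x>\overline\gamma/\underline\beta\}$. One then shows $\sup(v^1-v^2-\varepsilon w)\le0$, first for smooth solutions by the maximum principle, then in general by doubling the variables. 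Both arguments exploit the same geometric fact---forward SIR trajectories leave $Q$ only through the prescribed portion of the boundary---but encode it differently: you use it to guarantee that the exit value appearing in $\widehat V$ is always the prescribed $e^{-f}$; the paper uses it through the blow-up of $w$, which forces the perturbed supremum off the data-free face. Your approach makes the role of the partial boundary data transparent, at the cost of invoking the (standard but not entirely soft) passage from the viscosity subsolution property to the integrated sub-optimality inequality along measurable controls up to a control-dependent exit time; the paper's approach is shorter and self-contained, at the price of a somewhat ad hoc barrier. Amusingly, your final paragraph sketches precisely the barrier-plus-doubling argument that the paper actually carries out.
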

\begin{proof}
Let us assume that two continuously differentiable and bounded solutions to \eqref{eq:hjb} exist, say $v^1,v^2$.

Define 
\begin{equation*}
w(x,y,t)=
\begin{cases}
\frac{1}{2}\mu_0 t+x+y+\frac{g(x)}{y-\mu_0}, &\quad x\geq 0, y>\mu_0,t\geq0,\\
\frac{1}{2}\mu_0 t+x+y, &\quad 0\leq x\leq\overline{\gamma}/\underline{\beta},y=\mu_0,t\geq0,\\
+\infty, &\quad x>\overline{\gamma}/\underline{\beta},y=\mu_0,t\geq0,
\end{cases}
\end{equation*}
where $g=g(x):[0,\infty)\to[0,\infty)$ is a nondecreasing function that vanishes on $\left[0,\overline{\gamma}/\underline{\beta}\right]$ and that is positive in $\left(\overline{\gamma}/\underline{\beta},\infty\right)$. Then, $w$ is nonnegative, lower semicontinuous, and it satisfies 
\begin{align*}
&-\partial_t w+w+\beta x y \partial_x w +(\gamma-\beta x) y \partial_y w > 0 \quad\text{in}\quad \left(\overline{\gamma}/\underline{\beta},\infty\right)\times(\mu_0,\infty)\times(0,\infty).
\end{align*}

For a given $\varepsilon\in(0,1)$, consider 
\[
m:=\sup\{v^1(x,y,t)-v^2(x,y,t)-\varepsilon w(x,y,t):x\geq0, y\geq\mu_0,t\geq0\}.
\]
We claim that $m\leq0$. Once this claim is shown, we have $v^1\leq v^2+\varepsilon w$, and letting $\varepsilon\to0$ gives $v^1\leq v^2$. A symmetric argument will imply $v^2\leq v^1$.

Suppose not, i.e., $m>0$. Say $v^1,v^2$ are bounded by $M>0$. Then, there would exist $(\hat x,\hat y,\hat t)\in\left[0,\frac{2M}{\varepsilon}\right]\times\left[\mu_0,\frac{2M}{\varepsilon}\right]\times\left[0,\frac{4M}{\mu_0\varepsilon}\right]$ such that
$$
m=(v^1-v^2-\varepsilon w)(\hat x,\hat y,\hat t).
$$
By the boundary conditions of \eqref{eq:hjb} and the choice of $w$ (especially when $x>\overline{\gamma}/\underline{\beta}$), we see that $0<\hat x\leq\overline{\gamma}/\underline{\beta},\hat y>\mu_0,\hat t>0$. Then, by the maximum principle,
\begin{equation*}
\begin{cases}
0&=\partial_x (v^1 - v^2 -\varepsilon w)(\hat x,\hat y,\hat t),\\
0&=\partial_y (v^1 - v^2 -\varepsilon w)(\hat x,\hat y,\hat t),\\
0 &=\partial_t (v^1 - v^2 -\varepsilon w)(\hat x,\hat y,\hat t).
\end{cases}
\end{equation*}
Also, note that $(\partial_x v^1 (\hat x,\hat y,\hat t))_-=((\partial_x v^2+\varepsilon \partial_x w)(\hat x,\hat y,\hat t))_- \leq (\partial_x v^2 (\hat x,\hat y,\hat t))_-$ as $\partial_x w\geq0$. Therefore, $(\hat x,\hat y,\hat t)$, we deduce that
\begin{align*}
m&=v^1-v^2 -\epsilon w\\
&=\partial_t v^1-\beta \hat x\hat y \partial_x v^1 +\hat x(\partial_x v^1)^--(\gamma-\beta \hat x)\hat y v^1 -\left(\partial_t v^2-\beta \hat x\hat y \partial_x v^2 +\hat x(\partial_x v^2)^--(\gamma-\beta \hat x)\hat y v^2\right)-\varepsilon w\\
&\leq\partial_t(v^1 -v^2) -\beta \hat x \hat y \partial_x (v^1 - v^2)-(\gamma -\beta \hat x)\hat y\partial_y (v^1 - v^2)-\varepsilon w\\
&= -\varepsilon( -\partial_t w +w+\beta \hat x \hat y \partial_x w +(\gamma -\beta \hat x)\hat y\partial_y w)\\
&< 0,
\end{align*}
which is a contradiction. We finish the proof when the bounded solutions $v^1,v^2$ are continuously differentiable.

When the viscosity solutions $v^1,v^2$ are only continuous, we can show $v^1=v^2$ by the doubling variable technique (see \cite{Evans2010textbook,Tran2021HJtheory}). The goal is again to prove $m\leq0$. For $\varepsilon,\alpha\in(0,1)$, we let
\begin{equation}\label{eq:sup}
\begin{split}
m_\alpha:= &\sup \{ v^1(x_1,y_1,t_1)-v^2(x_2,y_2,t_2)-\frac{\epsilon}{2}(w(x_1,y_1,t_1)+w(x_2,y_2,t_2))\\
&\quad-\frac{1}{2\alpha}((x_1-x_2)^2+(y_1-y_2)^2+(t_1-t_2)^2):x_1,x_2,t_1,t_2 \geq 0,y_1,y_2\geq\mu_0\}.
\end{split}
\end{equation}
If the solutions $v^1,v^2$ are bounded by $M>0$, then there exist $(x_1^\alpha,y_1^\alpha,t_1^\alpha),(x_2^\alpha,y_2^\alpha,t_2^\alpha)\in\left[0,\frac{4M}{\varepsilon}\right]\times\left[\mu_0,\frac{4M}{\varepsilon}\right]\times\left[0,\frac{8M}{\mu_0\varepsilon}\right]$ such that the supremum~\eqref{eq:sup} is achieved and finite. By passing to a subsequence of $\alpha\to0$ if necessary, we have
\begin{equation}
\begin{cases}
\hat x := \lim_{\alpha\rightarrow 0} x_1^\alpha=\lim_{\alpha\rightarrow 0} x_2^\alpha,\\
\hat y := \lim_{\alpha\rightarrow 0} y_1^\alpha=\lim_{\alpha\rightarrow 0} y_2^\alpha,\\
\hat t := \lim_{\alpha\rightarrow 0} t_1^\alpha=\lim_{\alpha\rightarrow 0} t_2^\alpha,\\
\end{cases}
\end{equation}
and $\lim_{\alpha\rightarrow 0} m_\alpha = m$. By the boundary conditions of \eqref{eq:hjb} and the choice of $w$, we see that $0<\hat x\leq\overline{\gamma}/\underline{\beta},\hat y>\mu_0,\hat t>0$ as in \cite{hynd2021eradication}; otherwise, we readily achieve our goal to prove $m\leq0$. Therefore, we may assume the case that $(\hat x,\hat y,\hat t) \in (0,\infty) \times (\mu_0,\infty) \times (0,\infty)$ and the rest follows from the same argument as in~\cite{hynd2021eradication}. 
\end{proof}

\subsection{Local semiconcavity}

In this section, we establish the local semiconcavity of the value function $u(x,y,t)$ when $\beta(t)\equiv\beta_0,\gamma(t)\equiv\gamma_0$ for all $t\geq T$ for some $T>0$ depending only on $\mu_0,\mu,\overline{\gamma}$. Here, we keep $\mu_1\in(0,\mu_0]$ as in Theorem \ref{thm:choiceofmu} and let $\mu\in(0,\mu_1]$ so that $\overline{u}=\underline{u}$ on $[0,\infty)\times[\mu_0,\infty)\times[0,\infty)$.

The observation is from the fact that the local semiconcavity property propagates from that of initial/terminal data along the time (see \cite{barron1999regularity,cannarsa2004semiconcave}). See also the recent paper \cite{han2022semiconcavity} for the global semiconcavity. We first note the semiconcavity property of the terminal data $u(x,y,T)$ when $\beta(t)\equiv\beta_0,\gamma(t)\equiv\gamma_0$ for all $t\geq T$, which is the case studied in \cite{hynd2021eradication}.

\begin{thm}\cite[Theorem 1.3]{hynd2021eradication}\label{thm:terminaldata}
Suppose that $\beta(t)\equiv\beta_0,\gamma(t)\equiv\gamma_0$ for all $t\geq T$. Then, we have $\overline{u}(x,y,t)=\underline{u}(x,y,t)$ for all $x\geq0,y\geq\mu,t\geq T$. Moreover, $u(x,y,T):=\overline{u}(x,y,T)=\underline{u}(x,y,T)$ is locally semiconcave in $(x,y)\in(0,\infty)\times(\mu,\infty)$.
\end{thm}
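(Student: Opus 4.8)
\emph{Proof plan.} Since Theorem~\ref{thm:terminaldata} is \cite[Theorem 1.3]{hynd2021eradication}, I only sketch the argument; it splits into a reduction step and a regularity step. For $t\ge T$ the shifted rates $\beta^{t},\gamma^{t}$ are identically $\beta_0,\gamma_0$, so for every $\alpha$ the flow of \eqref{eq:translated} from $(x,y,t)$ solves the autonomous controlled SIR system, and since $\{\alpha^{t}:\alpha\in\mathcal A\}=\mathcal A$ the values $\overline u(x,y,t),\underline u(x,y,t)$ are independent of $t\ge T$; write $U(x,y):=u(x,y,T)$. The key fact is \emph{unimodality} of $I$ in the autonomous case: since $\dot S=-S(\beta_0I+\alpha)$ is strictly negative, the sign of $\beta_0S(s)-\gamma_0$ switches at most once and only from $+$ to $-$, so $I$ rises to some mode $s^{\ast}\in[0,\infty)$ and strictly decreases afterwards. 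Because $I(0)=y>\mu$ — this is exactly where $y>\mu$, not merely $y\ge\mu$, is used — the trajectory crosses $\mu$ exactly once, downward, at a time $\tau$ with $\dot I(\tau)<0$ (were $\dot I(\tau)=0$ we would get $\tau=s^{\ast}$ and $\mu=I(\tau)=\max I\ge y>\mu$, absurd), so $\beta_0S(\tau)<\gamma_0$ and $I<\mu$ on $(\tau,\infty)$. Hence $\overline u^{\alpha}=\underline u^{\alpha}$ for every $\alpha$, so $\overline u=\underline u=U$ for $t\ge T$, and $U(x,y)$ is the unique time at which the controlled trajectory first meets the target $\mathcal T:=\{I\le\mu\}$, necessarily through its smooth portion $\{I=\mu,\ S<\gamma_0/\beta_0\}$.

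For local semiconcavity I would use that $U$ is the minimal-time function to reach $\mathcal T$ for the autonomous dynamics and invoke the regularity theory for such functions (see \cite{cannarsa2004semiconcave}; compare \cite{barron1999regularity,han2022semiconcavity}): on the part of $\partial\mathcal T$ reached by optimal trajectories — the smooth hypersurface $\{I=\mu,\ S<\gamma_0/\beta_0\}$ — one has $\dot I=(\beta_0S-\gamma_0)\mu<0$ for \emph{every} admissible control, i.e. Petrov's controllability condition holds; together with smoothness of the dynamics and transversality of the crossing, this gives local semiconcavity of $U$ on the open set where it is finite and positive, namely $(0,\infty)\times(\mu,\infty)$ (finiteness being Lemma~\ref{lem:limitzero}). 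Alternatively, and more concretely, once one knows that $\alpha\equiv1$ is optimal (equivalently $\partial_xU\ge0$, so that the positive part in \eqref{eqn:main} disappears), one may write $U(x,y)=\sigma(x,y)$ with $\mathcal I(x,y,\sigma)=\mu$, $\mathcal I$ being the $I$-component of the fixed smooth flow of the $\alpha\equiv1$ system; since $\partial_s\mathcal I=\dot I<0$ at the crossing, the implicit function theorem makes $\sigma$, hence $U$, of class $C^{\infty}$ near every point of $(0,\infty)\times(\mu,\infty)$ — far more than needed.

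The delicate parts are: identifying the target and checking Petrov's condition exactly on the portion of $\partial\mathcal T$ actually hit by optimal trajectories — this is where unimodality and the strict inequality $y>\mu$ are indispensable, and it is why the statement excludes $\{y=\mu\}$, along which the crossing can happen at the degenerate point $S=\gamma_0/\beta_0$ and $U$ acquires square-root behaviour; and, for the concrete route, justifying that $\alpha\equiv1$ is optimal, which does \emph{not} follow from a comparison principle for \eqref{eq:translated} — the linearization of the SIR flow in the control is not cooperative (Metzler) — and must instead be obtained by a first-crossing/invariant-region analysis comparing an arbitrary control with the constant control $1$. One should also check that the mild nonsmoothness of the Hamiltonian coming from $(\partial_xu)_+$ does not obstruct the semiconcavity propagation, which it does not, since $p\mapsto p_+$ is convex and $\partial_xU\ge0$ on the relevant region, where the Hamiltonian is in fact smooth.
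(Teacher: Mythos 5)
The paper does not prove this statement at all: Theorem \ref{thm:terminaldata} is imported verbatim from \cite[Theorem 1.3]{hynd2021eradication} and is used only as a black-box input in the proof of Theorem \ref{thm:semiconcavity}, so there is no internal proof to compare yours against. Judged on its own, your first step is correct and is the standard argument for the autonomous case: for $t\ge T$ the shifted rates are constant, $S$ is strictly decreasing while $S,I>0$, so $\beta_0 S-\gamma_0$ changes sign at most once and only from $+$ to $-$, $I$ is unimodal, the crossing of $\mu$ is unique and (for $y>\mu$) transversal, whence $\overline u^{\alpha}=\underline u^{\alpha}$ for every $\alpha$ and the infima coincide. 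One small correction: the equality is asserted for all $y\ge\mu$, and your remark that $y>\mu$ is ``indispensable'' there is misleading — at $y=\mu$ transversality may fail (the degenerate crossing at $S=\gamma_0/\beta_0$), but the same unimodality still gives $\overline u^{\alpha}=\underline u^{\alpha}$ (either both vanish or both equal the unique later downward crossing); the strict inequality $y>\mu$ is only needed for the regularity claim, which is indeed stated on the open set.

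For the semiconcavity, your Route A (minimum-time function to the target $\{I\le\mu\}$, Petrov's condition on the reachable portion $\{I=\mu,\ S<\gamma_0/\beta_0\}$ of its boundary, then \cite{cannarsa2004semiconcave}) is the right kind of argument, but as stated it defers the one step that actually needs proof: the regularity theorems require Petrov's condition, uniformly, near all boundary points hit by trajectories issued from a fixed compact set, and you only assert that optimal trajectories land on the good part. This can be supplied: for initial data in $K\subset\subset(0,\infty)\times(\mu,\infty)$ and any control, $I$ must decay from a value $\ge\mu+\delta$ to $\mu$ at per-capita rate at most $\gamma_0$, which takes a time bounded below, during which $\dot S\le-\beta_0\mu S$ and $S\le\gamma_0/\beta_0$ after the mode; hence every crossing happens at $S\le\gamma_0/\beta_0-c(K)$ and $\dot I\le-c'(K)<0$ there — the analogue of the estimate \eqref{distant} that the paper itself uses. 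Your Route B, by contrast, has a genuine gap: it rests on the optimality of $\alpha\equiv1$ (equivalently $\partial_x U\ge0$ along optimal trajectories), which, as you yourself note, does not follow from a comparison principle because the coupled system is not cooperative, and for which you offer only an unspecified ``first-crossing/invariant-region analysis''; that claim is a nontrivial PMP-type result, not an easy lemma, and it is not needed. Moreover, if it were available it would yield $C^{\infty}$ regularity of $u(\cdot,\cdot,T)$, strictly more than \cite{hynd2021eradication} asserts — a sign that Route B should not be advertised as the ``more concrete'' complete argument. Keep Route A, add the uniform transversality estimate, and drop or explicitly condition Route B.
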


Now, we prove Theorem \ref{thm:semiconcavity}.

\begin{proof}[Proof of Theorem \ref{thm:semiconcavity}]
When $\mu=\mu_0$, the theorem follows from Theorem \ref{thm:terminaldata} since $T=0$. We assume the other case $\mu<\mu_0$ in the rest of the proof.

We start with the fact that for $x>0,y>\mu_0,$ we have $I^d(t)\geq y e^{-\overline{\gamma}t}> \mu_0 e^{-\overline{\gamma}t}$. Therefore, for $T:=\frac{1}{2\overline{\gamma}}\log\left(\frac{\mu_0}{\mu}\right)$, it holds that $I^d(T)\geq y\sqrt{\frac{\mu}{\mu_0}}>\mu_0$ for every $d\in\mathcal{D}$, which also implies
\begin{align}\label{distant}
\inf\left\{I^d(T):d=(x,y,t,\alpha)\in\mathcal{D},\ y>\mu_0+\delta\right\}>\mu
\end{align}
for every $\delta>0$.

Let $v(x,y,t)=u(x,y,t)+t-T$ for $x\geq0,y\geq\mu_0,t\geq0,$ and let $g(x,y)=u(x,y,T)$ for $x\geq0, y\geq\mu$. Then, by Proposition \ref{prop:dpp}, we have, for $(x,y,t)\in(0,\infty)\times(\mu_0,\infty)\times(0,T]$,
$$
v(x,y,t)=\inf_{\alpha\in\mathcal{A}}g\left(S^d(T-t),I^d(T-t)\right),
$$
where $d=(x,y,t,\alpha)\in\mathcal{D}$.

Our goal is to show the local semiconcavity of $v$, instead of that of $u$, which is sufficient to complete the proof. To this end, we fix $t\in(0,T]$, $K\subset\subset\Omega:=(0,\infty)\times(\mu_0,\infty)$ and $(x,y)\in K$. By Propositions \ref{prop:optimalcontrol}, \ref{prop:dpp}, there exists an optimal control $\alpha^{\ast}\in\mathcal{A}$ such that $v(x,y,t)=g\left(S^{d^{\ast}}(T-t),I^{d^{\ast}}(T-t)\right)$ where $d^{\ast}=(x,y,t,\alpha^{\ast})\in\mathcal{D}$.

Let $h=(h_1,h_2)$ be a vector in $\mathbb{R}^2$ whose magnitude is smaller than $\textrm{dist}(K,\partial\Omega)$. Let $z(s)=\left(S^{d^{\ast}}(s),I^{d^{\ast}}(s)\right)$ and $z_{\pm}(s)=\left(S^{d_{\pm}^{\ast}}(s),I^{d_{\pm}^{\ast}}(s)\right)$ for $s\in[0,t]$, where $d^{\ast}_{\pm}=(x\pm h_1,y\pm h_2,t,\alpha^{\ast})$. Then, it holds that 
\[
|z_+(t)-z_-(t)| \leq c|h|\quad\text{and}\quad|z_+(t)+z_-(t)-2z(t)| \leq c|h|^2,
\]
where $c$ is a constant depending only on the compact set $K\subset\subset\Omega$ and on the rates $\beta,\gamma$. Therefore,
\begin{equation*}
\begin{split}
&v(x+h_1,y+h_2,t)+v(x-h_1,y-h_2,t)-2v(x,y,t)\\
&\leq g(z_+(t))+g(z_-(t))-2g\left(\frac{z_+(t)+z_-(t)}{2}\right)+2g\left(\frac{z_+(t)+z_-(t)}{2}\right)-2g(z(t))\\
&\leq c\left(\left|\frac{z_+(t)-z_-(t)}{2}\right|^2+\left|z_+(t)+z_-(t)-2z(t)\right|\right)\\
&\leq c|h|^2.
\end{split}
\end{equation*}
Here, $c$ denotes constants that may vary line by line, but all of which depend only on the compact set $K\subset\subset\Omega$ and on the rates $\beta,\gamma$. We used \eqref{distant} and Theorem \ref{thm:terminaldata}. Note also that the local semiconcavity of $g$ in $(0,\infty)\times(\mu,\infty)$ implies the local Lipschitz regularity of $g$ in $(0,\infty)\times(\mu,\infty)$. This proves the local semiconcavity of $v(\cdot,\cdot,t)$ (or, of $u(\cdot,\cdot,t)$) in $\Omega$ for each $t\in(0,T]$.

The local semiconcavity in time also follows from a similar argument and we refer the details to~\cite{cannarsa2004semiconcave}.
\end{proof}

\section*{Acknowledgement}
The authors are grateful to Prof. Hung Vinh Tran at University of Wisconsin-Madison for introducing this problem. Yeoneung Kim is supported by RS-2023-00219980. 

\bibliographystyle{alpha}
\bibliography{Preprint}

\end{document}